\def\pf{\noindent {\it Proof.} }
\newtheorem{thm}{Theorem}
\newtheorem{lem}[thm]{Lemma}
\newtheorem{prop}[thm]{Proposition}
\newtheorem{rem}[thm]{Remark}
\newcommand{\Gen}{\mathop{\mathbf{Gen}}\nolimits}
\begin{document}

\begin{center}
	{\large \bf Stable  multivariate  Narayana polynomials and labeled  plane trees}
\end{center}

\begin{center}
	Harold R.L. Yang$^{a}$ and  Philip B. Zhang$^{b}$
	\\[6pt]

	$^{a}$College of Science\\
	Tianjin University of Technology and Education \\
	Tianjin 300222,
	R. R. China\\[6pt]

	$^{b}$College of Mathematical Science \\
	Tianjin Normal University, Tianjin  300387, P. R. China\\[6pt]

	Email:  $^{a}${\tt yangruilong@tute.edu.cn},
	$^{b}${\tt zhang@tjnu.edu.cn}
\end{center}

\noindent\textbf{Abstract.}
In this paper, we introduce stable multivariate generalizations of Narayana polynomials of type A and type B.
We give an insertion algorithm for labeled plane trees and introduce the notion of improper edges.
Our polynomials are multivariate generating polynomials of labeled plane trees and can be generated by a grammatical labeling based on a context-free grammar. Our proof of real stability uses a characterization of stable-preserving linear operators due to Borcea and  Br\"and\'en.
In particular, we get an alternative multivariate stable refinement of the second-order Eulerian polynomials, which is different from the one given by Haglund and Visontai.

\noindent {\bf AMS Classification 2020:} 05A15, 26C10

\noindent {\bf Keywords:}  context-free grammar;  Narayana polynomial; labeled plane tree

\section{Introduction}\label{intro-sec}
Univariate polynomials with only real roots often occur in combinatorics and can lead to interesting conclusions.
There are several real-rooted univariate polynomials in combinatorics, including Eulerian polynomials, Narayana polynomials, matching polynomials of graphs, and independence polynomials of claw-free polynomials.  We refer to the following surveys~\cite{Braenden2015Unimodality, Brenti1989Unimodal, Brenti1994Log, Stanley1989Log}.

Due to the work of  Borcea and Br\"and\'en~\cite{Borcea2009Lee, Borcea2009Leea, Borcea2009Polya}, a successful multivariate generalization of real-rooted polynomials, called \emph{stable polynomials}, have been developed.
Now let us recall the notion of real stability, which generalizes the
notion of real-rootedness from univariate real polynomials to multivariate real polynomials.
Given a positive integer $n$, let $\bm{x}$ be an  $n$-tuple $(x_1, \dots, x_n)$.
Let $\mathbb{H}_+ = \{z \in \mathbb{C} : \mbox{Im}(z)>0 \}$ denote the open upper complex
half-plane.
A polynomial $f \in \mathbb{R}[\bm{x}]$ is said to be (real) \emph{stable} if $f(\bm{x}) \neq 0$  for any
$\bm{x} \in \mathbb{H}_+^n$ or $f$ is identically zero.
Note that a univariate polynomial $f(x) \in \mathbb{R}[x]$ is stable if and only if it has only real roots.
Multivariate stable polynomials have been
studied in many different areas, including control theory, statistical mechanics, partial differential equations, and functional analysis.
See the surveys~\cite{Braenden2015Unimodality, Pemantle2012Hyperbolicity, Wagner2011Multivariate} for further applications of stable polynomials.
It is remarkable that an extension of stable polynomials, namely \emph{Lorentzian polynomials}, was studied by Br\"and\'en\ and Huh~\cite{Braenden2020Lorentzian}.

A natural question to ask is to find a stable multivariate generalization of real-rooted univariate polynomials in combinatorics.
Several multivariate polynomials have been found in combinatorics.
The multivariate Eulerian polynomials and more general Eulerian-like
polynomials were introduced by Br\"and\'en, Haglund, Visontai, and Wagner in \cite{Braenden2011Proof}
and used to prove a conjecture of Haglund, Ono, and Wagner \cite{Braenden2011Proof}.
Subsequently, multivariate Eulerian polynomials over Stirling permutations~\cite{Chen2020Context, Haglund2012Stable},
colored permutations~\cite{Braenden2016Multivariate, Visontai2013Stable}, segmented permuations~\cite{Zhang2019Multivariate}, and related combinatorial models have been studied.

In this paper, we focus on finding stable multivariate generalizations of two kinds of Narayana polynomials.
The Narayana polynomial of type A,  defined as
$
	N_n^A(x)=\sum_{k=1}^n  \frac{1}{n}{n\choose k}{n\choose k-1} x^k,
$
is  a $q$-analogue of  the famous Catalan number, and the coefficent of $x^k$ in $N_n^A(x)$, usually called \emph{Narayana number},  counts Dyck paths of semilength $n$ with $k$ peaks, or unlabeled  plane trees with $n$ edges and $k$ leaves, see \cite[A001263]{SloaneOnline}.
Remarkablely, Narayana polynomial of type A
is also the $h$-polynomial of the simplicial complex dual to an associahedron of type $A_n$, see \cite{Fomin2007Root}.
The Narayana polynomial of type B,  defined as
$
	N_n^B(x)=\sum_{k=0}^n {n\choose k}^2x^k,
$
is the $h$-polynomial of the simplicial complex dual to an associahedron of type $B_n$ (a cyclohedron), see \cite[A008459]{SloaneOnline}.
It is known that both $N_n^A(x)$ and $N_n^B(x)$ are real-rooted for any integer $n\geq 1$, see  Br\"and\'en~\cite{Braenden2006linear}, and Liu and Wang~\cite{Liu2007unified}.

Our multivariate polynomials are based on labeled plane trees.
A labeled plane tree with $n$ nodes refers to a rooted plane tree in which each node is assigned a unique label from the set $[n]=\{1,2, \ldots, n\}$.
Denote by $\mathcal{T}_{n}$ the set of labeled plane trees with $n$ nodes and by $\mathcal{T}^{*}_{n}$ the set of labeled plane trees with  $n+2$ nodes in which the node $\mathbf{1}$ is the leftmost leaf of the node $\mathbf{2}$.
Now we define
$$F_n(\mathbf{x}, \mathbf{y}, s, t) = \sum_{T\in \mathcal{T}_{n+1}} \mathbf{wt}(T)$$
and
$$F^{*}_n(\mathbf{x}, \mathbf{y}, s, t) = \sum_{T\in \mathcal{T}^*_{n}} \mathbf{wt}(T).$$
Here the details of the definition of $\mathbf{wt}(T)$ will be given in Section~\ref{section-4}.
Note that labeled plane trees are in bijections with quasi-Stirling permutations, which have been extensively studied by Elizalde~\cite{Elizalde2021Descents}, and Yan and Zhu~\cite{Yan2022Quasi}.

A Cayley tree refers to a tree where the children of each node are unordered.
It is well-known there are $n^{n-2}$ Cayley trees on $[n]$. To give a combinatorial proof of this enumerative result, Shor~\cite{Shor1995new} introduced the notion of improper edges, and provided a method to construct Cayley trees on $[n+1]$ from Cayley trees on $[n]$.
Inspired by Shor's work, we give an insertion algorithm for labeled plane trees and introduce the notion of improper edges.

Our construction of $F_n(\mathbf{x}, \mathbf{y}, s, t)$ and $F^{*}_n(\mathbf{x}, \mathbf{y}, s, t)$ is via a context-free grammar as well as a grammatical labeling of labeled plane trees.
Chen~\cite{Chen1993Context} initiated the study of applications of context-free grammars to combinatorics. Chen and Fu~\cite{Chen2017Context} introduced the notations of grammatical labelings to generate combinatorial structures, such as permutations and increasing trees, via a recursive process.
Further study on grammatical labelings related to trees are given in~\cite{Chen2023Gessel, Chen2020Context, Chen2021context}.

The main result of this paper is as follows.

\begin{thm}\label{thm-main}
	For any positive integer $n$, we have
	\begin{itemize}
		\item[(i)] $F_n(\mathbf{x}, \mathbf{y}, s, t)$ and $F^*_n(\mathbf{x}, \mathbf{y}, s, t)$
		      are multivariate generalizations of  $N_n^A(x)$ and $N_n^B(x)$, respectively. Namely,
		      \begin{align}
			      F_n(x,x,\ldots,x,1,1,\ldots,1, t, t)=(n+1)!t^nN_n^A(x),
		      \end{align}
		      and
		      \begin{align}
			      F^*_n(x,x,\ldots,x,1,1,\ldots,1, t, t)=n!t^{n+1}N_n^B(x).
		      \end{align}
		\item[(ii)]  $F_n(\mathbf{x}, \mathbf{y}, s, t)$ and $F^*_n(\mathbf{x}, \mathbf{y}, s, t)$
		      are real stable over variables $\mathbf{x}$ and  $\mathbf{y}$ for any positive real numbers $s$ and $t$.
	\end{itemize}
\end{thm}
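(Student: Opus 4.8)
The plan is to treat the two parts of Theorem~\ref{thm-main} separately: part~(i) is essentially bookkeeping once the machinery of Section~\ref{section-4} is in place, while part~(ii) is the substantive statement and is where the Borcea--Br\"and\'en theory enters.

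For part~(i) I would first record, from the grammatical labeling of Section~\ref{section-4}, that iterating the formal derivative $D$ attached to the underlying context-free grammar reproduces $F_n$ and $F^*_n$; the content here is the correctness of the insertion algorithm — that inserting a new node (tracked through the notion of improper edges) turns the weight generating function over $\mathcal{T}_{n+1}$ into the one over $\mathcal{T}_{n+2}$, with each labeled plane tree produced exactly once — which I would establish by induction on $n$. Granting this, the specialization $x_i\mapsto x$, $y_i\mapsto 1$, $s\mapsto t$ reduces the weight of a labeled plane tree $T$ to $x^{\ell(T)}t^{e(T)}$, where $\ell(T)$ and $e(T)$ are the numbers of leaves and edges of $T$ (the variables $s$ and $t$ merging to record the edge count regardless of whether an edge is improper), so that
\[
	F_n(x,\dots,x,1,\dots,1,t,t)=\sum_{T\in\mathcal{T}_{n+1}} x^{\,\ell(T)}\,t^{\,e(T)}.
\]
Since a rooted plane tree has no nontrivial automorphism, each plane shape on $n+1$ nodes admits exactly $(n+1)!$ distinct labelings, and the shapes with $n$ edges and $k$ leaves are counted by the Narayana numbers, so the right-hand side equals $(n+1)!\,t^{n}N_n^A(x)$. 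The type~B identity goes the same way: the defining constraint of $\mathcal{T}^*_n$ (node $\mathbf 1$ is the leftmost leaf of node $\mathbf 2$) freezes two nodes, leaving $n!$ labelings of each admissible shape and shifting the relevant plane-shape statistic to the one enumerated by $N_n^B$, while the $n+1$ edges of a tree in $\mathcal{T}^*_n$ contribute the factor $t^{n+1}$.

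For part~(ii) I would argue stability by induction on $n$. One checks from the definition in Section~\ref{section-4} that the weight of a tree is a product of one variable $x_i$ or $y_i$ per node times a monomial in $s,t$ alone, so $F_n$ and $F^*_n$ are multi-affine in the variables $\bm x\cup\bm y$; the whole argument can therefore be carried out inside the ring of multi-affine polynomials, where the Borcea--Br\"and\'en theory is cleanest. The inductive step $F_n\mapsto F_{n+1}$ (and likewise $F^*_n\mapsto F^*_{n+1}$) is realized by a single linear operator $\Phi_n$: reading one step of the insertion algorithm — equivalently, one application of the grammar's derivation with a fresh pair of variables $x_{n+2},y_{n+2}$ attached to the new node — presents $\Phi_n$ as a sum, over the admissible insertion sites, of local operations built from multiplication by variables and from differentiation, with $s$ and $t$ entering only as the nonnegative constants they have been specialized to. It then suffices to show that each $\Phi_n$ sends real stable polynomials to real stable polynomials, the base case $n=1$ being a direct check.

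To prove that $\Phi_n$ preserves real stability I would invoke the Borcea--Br\"and\'en characterization of stability-preserving linear operators: one has to verify that the symbol of $\Phi_n$ — that is, $\Phi_n$ applied to the universal multi-affine polynomial $\prod_i (x_i+u_i)(y_i+v_i)$ with fresh auxiliary variables $u_i,v_i$ — is itself a real stable polynomial, after which stability of $\Phi_n$, and hence inductively of all $F_n$ and $F^*_n$, follows. \emph{I expect this symbol computation to be the main obstacle.} Because $\Phi_n$ is a sum over insertion sites and real stability is not preserved under addition, the combinatorics of improper edges has to be used to show that these local operations assemble into one stability-preserving operator rather than merely being well behaved one at a time. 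I anticipate the resolution to be either an explicit factorization of the symbol into manifestly stable pieces — real linear forms with nonnegative coefficients, together with a permanent- or determinant-type factor whose requisite positivity is guaranteed by $s,t>0$ — or a direct check of Borcea--Br\"and\'en's criterion via a Hermite--Biehler-type argument in one distinguished variable. A secondary point, probably best isolated as a lemma, is careful management of the fresh variables introduced at each step and of the roles of $s$ and $t$, so that the inductive hypothesis is applied to exactly the right polynomial and no spurious vanishing is created when the auxiliary variables are later specialized to real values.
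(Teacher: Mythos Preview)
Your outline is essentially the paper's strategy, but two places where you are vague or mis-anticipate the difficulty are worth flagging.

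\textbf{Part (i), type $B$.} Your sentence ``freezes two nodes, leaving $n!$ labelings of each admissible shape and shifting the relevant plane-shape statistic to the one enumerated by $N_n^B$'' hides the only nontrivial content of part~(i). Fixing the shape of a tree in $\mathcal{T}^*_n$ does \emph{not} simply leave $n!$ labelings: one must also choose \emph{which} old leaf carries the label~$\mathbf 1$, so a shape $T$ is hit $n!\cdot ol(T)$ times. Thus the specialization reduces to showing
\[
\sum_{T\in\mathcal{U}_{n+2}} ol(T)\,x^{\ell(T)-1}y^{i(T)-1}=N_n^B(x,y),
\]
and this is not bookkeeping; the paper proves it by invoking the Chen--Deutsch--Elizalde formula for the number of plane trees with prescribed numbers of leaves and old leaves, followed by a binomial manipulation and Vandermonde's identity (Theorem~\ref{thm-NB} and Lemma~\ref{lem-uud}). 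Your type~$A$ argument is fine and matches the paper.

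\textbf{Part (ii).} Your worry that ``the combinatorics of improper edges has to be used to show that these local operations assemble into one stability-preserving operator'' is misplaced, and the symbol computation is much cleaner than you anticipate. The key observation you are missing is that every monomial of $F_{n-1}$ is homogeneous of degree $n-1$ in $s$ and $t$ together (one factor per edge), so Euler's relation gives $\bigl(s\,\partial_s+t\,\partial_t\bigr)F_{n-1}=(n-1)F_{n-1}$. This collapses the grammar operator $D_n$ to
\[
T_n=(n-1)(s\,x_{n+1}+t\,y_{n+1})\;+\;(s+t)\,x_{n+1}y_{n+1}\sum_{k=1}^{n}\Bigl(\partial_{x_k}+\partial_{y_k}\Bigr),
\]
a first-order operator in the $\mathbf{x},\mathbf{y}$ variables alone with $s,t>0$ as parameters. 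Its Borcea--Br\"and\'en symbol, after dividing by $x_{n+1}y_{n+1}\prod_k(x_k+\hat x_k)(y_k+\hat y_k)$, is
\[
(n-1)\Bigl(\tfrac{s}{y_{n+1}}+\tfrac{t}{x_{n+1}}\Bigr)+(s+t)\sum_{k=1}^{n}\Bigl(\tfrac{1}{x_k+\hat x_k}+\tfrac{1}{y_k+\hat y_k}\Bigr),
\]
which has strictly negative imaginary part whenever all variables lie in $\mathbb{H}_+$. No factorization, Hermite--Biehler argument, or improper-edge combinatorics is needed; the whole computation is five lines. The same operator (with index shifted) handles $F^*_n$.
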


The rest of this paper is organized as follows.
In Section~\ref{section-2}, we introduce an insertion algorithm for labeled plane trees.
Section~\ref{section-3} is devoted to a context-free grammar as well as a grammatical labeling which can be used to generate labeled plane trees. Our grammar can be transformed to a bivariate grammar of Ma, Ma, and Yeh~\cite{Ma2019gamma}  during their study of Narayana polynomials.
As applications of our grammar, we give grammatical proofs of convolution identities and generating functions of $N_n^A(x)$ and $N_n^B(x)$.
In Section~\ref{section-4}, we define a refined grammatical labeling for labeled plane trees to prove Theorem~\ref{thm-main} by Borcea and Br\"and\'en's characterization of stability-preserving linear operators.
We also present two applications of Theorem~\ref{thm-main}. In particular, we give an alternative multivariate stable refinement of second-order Eulerian polynomials, distinct from that provided by Haglund and Visontai~\cite{Haglund2012Stable}.

\section{An insertion algorithm for labeled plane trees}~\label{section-2}

For each edge of the tree, we call the node closer to the root as the \emph{parent} node of the edge, and the node farther from the root as the \emph{child} node of the edge.
Nodes with the same parent are called \emph{siblings} and the siblings to the left of a node $v$ are called \emph{elder} siblings of $v$.
A node is called a \emph{leaf} if it has no children and an \emph{interior node} otherwise.
Following Chen~\cite{Chen2006Old}, a node is called
\emph{old} if it is the leftmost child of its parent; otherwise, it is \emph{young}.
For our convenience, we call the node labeled $i$ as node $\mathbf{i}$.

In this section, we provide an insertion algorithm for labeled plane trees by adding a new node $\mathbf{n+1}$  from a tree in $\mathcal{T}_{n}$ via one of the following manners:


\begin{itemize}
	\item[N1:]  Choose a node $\mathbf{i}$, which can be a leaf or an interior node, and add $\mathbf{n+1}$ to be the old leaf of~$\mathbf{i}$. In the new tree, the node $\mathbf{n+1}$ is an old leaf.

	      \begin{center}
		      \begin{tikzpicture}[scale=0.2]
			      \draw [line width=1pt] (-15,0)-- (-15,3);
			      \draw [line width=1pt] (-15,0.)-- (-15-4,-5);
			      \draw (1-15,0.5) node {$i$};
			      \draw [line width=1pt] (-15,0)-- (-15+4,-5.);
			      \draw [line width=1pt, loosely dotted] (-4-15,-5.)-- (4-15,-5.);
			      \draw [fill=black] (-15,0.) circle (6pt);
			      \draw [fill=black] (-4-15,-5.) circle (6pt);
			      \draw [fill=black] (4-15,-5.) circle (6pt);

			      \draw[->] (-11,0) -- (-5,0);

			      \draw [line width=1pt] (0,0.)-- (0.,3);
			      \draw [line width=1pt] (0,0.)-- (-4,-5);
			      \draw [line width=1pt] (0,0.)-- (4,-5.);
			      \draw [line width=1pt, loosely dotted] (-4.,-5.)-- (4.,-5.);
			      \draw [fill=black] (0.,0.) circle (6pt);

			      \draw [fill=black] (-4.,-5.) circle (6pt);
			      \draw [fill=black] (4.,-5.) circle (6pt);
			      \draw (1,0.5) node {$i$};
			      \draw [color=blue,fill=blue] (-6.,-5.) circle (8pt);
			      \draw [line width=1pt, blue] (0.,0.)-- (-6.,-5.);
			      \draw[color=blue] (-6,-6.5) node {\small{$n+1$}};
		      \end{tikzpicture}
	      \end{center}

	\item[N2:]

	      Choose a node $\mathbf{i}$, relabel the node $\mathbf{i}$ by $\mathbf{n+1}$, and add a new node $\mathbf{i}$ as the old leaf of $\mathbf{n+1}$.
	      In the new tree, the node $\mathbf{n+1}$ is an interior node whose old child is a leaf.
	      \begin{center}
		      \begin{tikzpicture}[scale=0.2]
			      \draw [line width=1pt] (-15,0)-- (-15,3);
			      \draw [line width=1pt] (-15,0.)-- (-15-4,-5);
			      \draw (1-15,0.5) node {$i$};
			      \draw [line width=1pt] (-15,0)-- (-15+4,-5.);
			      \draw [line width=1pt, loosely dotted] (-4-15,-5.)-- (4-15,-5.);
			      \draw [fill=black] (-15,0.) circle (6pt);
			      \draw [fill=black] (-4-15,-5.) circle (6pt);
			      \draw [fill=black] (4-15,-5.) circle (6pt);

			      \draw[->] (-11,0) -- (-5,0);

			      \draw [line width=1pt] (0,0.)-- (0.,3);
			      \draw [line width=1pt] (0,0.)-- (-4,-5);
			      \draw [line width=1pt] (0,0.)-- (4,-5.);
			      \draw [line width=1pt, loosely dotted] (-4.,-5.)-- (4.,-5.);
			      \draw [color=blue,fill=blue] (0.,0.) circle (8pt);

			      \draw [fill=black] (-4.,-5.) circle (6pt);
			      \draw [fill=black] (4.,-5.) circle (6pt);
			      \draw [color=blue]  (2.5, 0.5) node {\small{$n+1$}};
			      \draw [color=black,fill=black] (-6.,-5.) circle (6pt);
			      \draw [line width=1pt, blue] (0.,0.)-- (-6.,-5.);
			      \draw [color=black] (-6,-6.5) node {$i$};
		      \end{tikzpicture}
	      \end{center}

	\item[E1:]
	      Choose an edge $(\mathbf{i},\mathbf{j})$, and add $\mathbf{n+1}$ to be a leaf as  the younger brother of $\mathbf{j}$. In the new tree, the node $\mathbf{n+1}$ is a young leaf.

	      \begin{center}
		      \begin{tikzpicture}[scale=0.2]
			      \draw [line width=1pt] (-15,0)-- (-15,3);
			      \draw [line width=1pt] (-15,0.)-- (-15-4,-5);
			      \draw (1-15,0.5) node {$i$};
			      \draw [line width=1pt] (-15,0)-- (-15+4,-5.);
			      \draw [line width=1pt] (-15,0)-- (-15+1,-5.);
			      \draw [line width=1pt, loosely dotted] (-4-15,-5.)-- (-1-15,-5.);
			      \draw [line width=1pt, loosely dotted] (1-15,-5.)-- (4-15,-5.);
			      \draw [fill=black] (-15,0.) circle (6pt);
			      \draw [fill=black] (-4-15,-5.) circle (6pt);
			      \draw [fill=black] (4-15,-5.) circle (6pt);
			      \draw [fill=black] (-1-15,-5.) circle (6pt);
			      \draw [fill=black]  (1-15,-5) circle (6pt);
			      \draw [line width=1pt] (-15,0.)-- (-15-1,-5);
			      \draw (-1-15,-6.5) node {$j$};

			      \draw[->] (-11,0) -- (-5,0);

			      \draw [line width=1pt] (0,0)-- (0,3);
			      \draw [line width=1pt] (0,0)-- (-3,-5);
			      \draw [line width=1pt, blue] (0,0)-- (-1,-5);
			      \draw [line width=1pt] (0,0)-- (1,-5);
			      \draw [line width=1pt] (0,0)-- (4,-5);
			      \draw [line width=1pt, loosely dotted] (-6,-5)-- (-3,-5);
			      \draw [line width=1pt, loosely dotted] (1,-5)-- (4,-5);
			      \draw [fill=black] (0.,0.) circle (6pt);

			      \draw [fill=black] (-3,-5) circle (6pt);
			      \draw [fill=blue, blue] (-1,-5) circle (8pt);
			      \draw [fill=black] (1,-5) circle (6pt);
			      \draw [fill=black] (4,-5) circle (6pt);
			      \draw (1,0.5) node {$i$};
			      \draw [fill=black]  (-6,-5) circle (6pt);
			      \draw [line width=1pt] (0,0)-- (-6,-5);
			      \draw  (-3,-6.5) node {$j$};
			      \draw [color=blue] (0.1,-6.5) node {\small{$n+1$}};
		      \end{tikzpicture}
	      \end{center}

	\item[E2:]
	      Choose an edge $(\mathbf{i},\mathbf{j})$. We relabel $\mathbf{i}$ by $\mathbf{n+1}$ and make $\mathbf{i}$ be the old child of $\mathbf{n+1}$. Meanwhile,  assign the elder siblings of  $\mathbf{j}$ as well as itself to be the children of $\mathbf{i}$, and assign the remaining children of $\mathbf{i}$ to be the younger siblings of $\mathbf{i}$. In the new tree, the node $\mathbf{n+1}$ is an interior node whose first child is interior.
	      Note that a similar operation appeared in Shor's paper~\cite{Shor1995new}.

	      \begin{center}
		      \begin{tikzpicture}[scale=0.2]
			      \draw [line width=1pt] (-15,0)-- (-15,3);
			      \draw [line width=1pt] (-15,0.)-- (-15-4,-5);
			      \draw (1-15,0.5) node {$i$};
			      \draw  (1-15,-6.5) node {$k$};
			      \draw [line width=1pt] (-15,0)-- (-15+4,-5.);
			      \draw [line width=1pt] (-15,0)-- (-15+1,-5.);
			      \draw [line width=1pt, loosely dotted] (-4-15,-5.)-- (-1-15,-5.);
			      \draw [line width=1pt, loosely dotted] (1-15,-5.)-- (4-15,-5.);
			      \draw [fill=black] (-15,0.) circle (6pt);
			      \draw [fill=black] (-4-15,-5.) circle (6pt);
			      \draw [fill=black] (4-15,-5.) circle (6pt);
			      \draw [fill=black] (-1-15,-5.) circle (6pt);
			      \draw [fill=black]  (1-15,-5) circle (6pt);
			      \draw [line width=1pt] (-15,0.)-- (-15-1,-5);
			      \draw (-1-15,-6.5) node {$j$};

			      \draw[->] (-11,0) -- (-5,0);

			      \draw [line width=1pt] (0,0)-- (0,3);
			      \draw [line width=1pt] (0,0)-- (1,-5);
			      \draw [line width=1pt] (0,0)-- (4,-5);
			      \draw [line width=1pt, loosely dotted] (1,-5)-- (4,-5);
			      \draw [fill=blue, blue] (0.,0.) circle (8pt);
			      \draw [fill=black] (1,-5) circle (6pt);
			      \draw [fill=black] (4,-5) circle (6pt);
			      \draw [color=blue] (2.5,0.5) node {\small{$n+1$}};
			      \draw  (1,-6.5) node {$k$};
			      \draw [line width=1pt, blue] (0,0)-- (-2,-5);
			      \draw [fill=black] (-2,-5) circle (6pt);
			      \draw  (-1,-5.5) node {$i$};
			      \draw [line width=1pt] (-2,-5)-- (-4,-5-5);
			      \draw [line width=1pt] (-2,-5)-- (-1,-5-5);
			      \draw [fill=black]  (-4,-5-5) circle (6pt);
			      \draw [line width=1pt, loosely dotted] (-4,-5-5)-- (-1,-5-5);
			      \draw [fill=black] (-1,-5-5) circle (6pt);
			      \draw  (-1,-5.5-6) node {$j$};
		      \end{tikzpicture}
	      \end{center}
\end{itemize}

For instance, the following sequence shows the progress of generating a labeled plane tree with seven nodes.

\begin{center}

	\begin{tikzpicture}[scale=0.75]
		\draw [fill=red, red]  (0.5,0) circle (2.2pt);

		\draw [->,line width=1pt] (1+0.5,0.) -- (2+0.5,0.);
		\draw[color=black] (2,0.5) node {N1};

		\draw [fill=red, red]  (3+0.5,0.5) circle (2.2pt);
		\draw [fill=black]  (3+0.5,-0.5) circle (2pt);
		\draw [line width=1pt] (3+0.5,0.44)-- (3+0.5,-0.5);

		\draw [->,line width=1pt] (4+0.5,0.) -- (5+0.5,0.);
		\draw[color=black] (5,0.5) node {N2};

		\draw [fill=black]  (7+0.5,1.) circle (2pt);
		\draw [fill=black]  (6+0.5,0) circle (2pt);
		\draw [fill=black,red]  (8+0.5,0) circle (2pt);
		\draw [line width=1pt] (7+0.5,1.)-- (6+0.5,0.);
		\draw [line width=1pt] (7+0.55,0.95)-- (8+0.45,0.05);

		\draw [->,line width=1pt] (9+0.5,0.) -- (10+0.5,0.);
		\draw[color=black] (10,0.5) node {N2};

		\draw [fill=black]  (12+0.5,1.05) circle (2.2pt);
		\draw [fill=black]  (13+0.5,0) circle (2pt);
		\draw [fill=black]  (13+0.5,-1.) circle (2pt);
		\draw [fill=black]  (11+0.5,0) circle (2pt);
		\draw [color=black, line width=1pt,red] (12+0.5,1.)-- (11+0.5,0.05);
		\draw [line width=1pt] (12+0.5,1.)-- (13+0.5,0.);
		\draw [line width=1pt] (13+0.5,0.)-- (13+0.5,-1.);

		\draw [->,line width=1pt] (0.,-4.) -- (1.,-4.);
		\draw[color=black] (0.5,-3.5) node {E1};

		\draw [fill=black]  (2,-4) circle (2.2pt);
		\draw [fill=black]  (3,-3) circle (2pt);
		\draw [fill=black]  (3,-4) circle (2pt);
		\draw [fill=black]  (4,-4) circle (2pt);
		\draw [fill=black]  (4,-5) circle (2pt);
		\draw [line width=1pt, red] (2.95,-3.05)-- (2.05,-3.95);
		\draw [line width=1pt] (3,-3)-- (3,-4);
		\draw [line width=1pt] (3,-3)-- (4,-4);
		\draw [line width=1pt] (4,-4)-- (4,-5);

		\draw [->,line width=1pt] (5,-4) -- (6,-4);
		\draw[color=black] (5.5, -3.5) node {E2};

		\draw [fill=black]  (8,-3) circle (2pt);
		\draw [fill=black]  (7,-4) circle (2pt);
		\draw [fill=black]  (7,-5) circle (2pt);
		\draw [fill=black]  (8,-4) circle (2pt);
		\draw [fill=black]  (9,-4) circle (2pt);
		\draw [fill=black]  (9,-5) circle (2pt);
		\draw [line width=1pt] (8.,-3.)-- (7.,-4.);
		\draw [line width=1pt] (8.,-3.)-- (8.,-4.);
		\draw [line width=1pt] (8.,-3.)-- (9.,-4.);
		\draw [line width=1pt] (9.,-4.)-- (9.,-5.);
		\draw [color=red, line width=1pt] (7.,-4.05)-- (7.,-5+0.05);

		\draw [->,line width=1pt] (10.,-4.) -- (11.,-4.);
		\draw[color=black] (10.5,-3.5) node {E1};
		\draw [fill=black]  (13,-3) circle (2pt);
		\draw [fill=black]  (12,-4) circle (2pt);
		\draw [fill=black]  (13,-4) circle (2pt);
		\draw [fill=black]  (11.5,-5) circle (2pt);
		\draw [fill=black]  (12.5,-5) circle (2pt);
		\draw [fill=black]  (14,-4) circle (2pt);
		\draw [fill=black]  (14,-5) circle (2pt);
		\draw [line width=1pt] (13.,-3.)-- (12.,-4.);
		\draw [line width=1pt] (13.,-3.)-- (13.,-4.);
		\draw [line width=1pt] (13.,-3.)-- (14.,-4.);
		\draw [line width=1pt] (12.,-4.)-- (11.5,-5);
		\draw [line width=1pt] (12.,-4.)-- (12.5,-5);
		\draw [line width=1pt] (14.,-4.)-- (14.,-5.);

		\begin{scriptsize}
			\draw[color=black] (0.3,0.2) node {1};
			\draw[color=black] (3.2,0.7) node {1};
			\draw[color=black](3.2,-0.7) node {2};
			\draw[color=black] (7+0.2,1.3) node {3};
			\draw[color=black] (6+0.5,-0.3) node {1};
			\draw[color=black] (8+0.5,-0.3) node {2};
			\draw[color=black] (12+0.2,1.3) node {3};
			\draw[color=black] (11+0.5,-0.3) node {1};
			\draw[color=black] (13.3+0.5,0) node {4};
			\draw[color=black] (13.3+0.5,-1) node {2};
			\draw[color=black] (3+0.2,-2.7) node {3};
			\draw[color=black] (2,-4.3) node {1};
			\draw[color=black] (3,-4.3) node {5};
			\draw[color=black] (4.3,-4) node {4};
			\draw[color=black] (4.3,-5) node {2};

			\draw[color=black] (8+0.2,-2.7) node {6};
			\draw[color=black] (6.7,-4) node {3};
			\draw[color=black] (8,-4.3) node {5};
			\draw[color=black] (9.3,-4) node {4};
			\draw[color=black] (9.3,-5) node {2};
			\draw[color=black] (6.7,-5) node {1};

			\draw[color=black] (13+0.2,-2.7) node {6};
			\draw[color=black] (11.7,-4) node {3};
			\draw[color=black] (13,-4.3) node {5};
			\draw[color=black] (14.3,-4) node {4};
			\draw[color=black] (14.3,-5) node {2};
			\draw[color=black] (11.4,-5.3) node {1};
			\draw[color=black] (12.5,-5.3) node {7};
		\end{scriptsize}
	\end{tikzpicture}
\end{center}

The trees obtained in the four manners are distinct. We claim that the insertion algorithm can generate all trees in $\mathcal{T}_{n+1}$ from trees in $\mathcal{T}_{n}$. To do this, we just need to construct the corresponding deletion algorithm for a tree $T\in \mathcal{T}_{n+1}$:
\begin{itemize}
	\item If the node $\mathbf{n+1}$ is a leaf, we delete the node directly;
	\item If the node $\mathbf{n+1}$ is an interior node with the oldest child $\mathbf{k}$, we   contract the edge $(\mathbf{n+1},\mathbf{k})$ to a node $\mathbf{k}$, and the children of $\mathbf{k}$ and $\mathbf{n+1}$ keep the previous order respectively and the children of $\mathbf{k}$ is elder than the children of $\mathbf{n+1}$.
\end{itemize}

It is not difficult to verify the deletion algorithm described above is well-defined.

As a direct application of this insertion algorithm, we can give the following combinatorial interpretation of the recurrence relation of Narayana numbers.
\begin{prop}[{\cite{Chen2022Recurrences,Ma2019gamma}}]
	The Narayana numbers satisfy the following identity.
	\begin{align}\label{rec-A-number}
		(n+2)  N(n+1, k)  =   (n+2k) {N}(n, k)+(3 n+4-2 k) {N}(n, k-1).
	\end{align}
	Therefore, the Narayana polynomials satisfy
	\begin{align}\label{rec-A-poly}
		(n+2) N_{n+1}^A(x) =  \left( (3n+2)x + n\right)N_{n}^A(x) +2(x-x^2) \left( N_{n}^A(x) \right)'.
	\end{align}
\end{prop}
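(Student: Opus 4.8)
The plan is to read the recurrence \eqref{rec-A-number} off the insertion algorithm by tracking how the number of leaves changes under each of the four moves, and then to deduce \eqref{rec-A-poly} from \eqref{rec-A-number} by the standard ``multiply by $x^k$ and differentiate'' trick. First I would reformulate \eqref{rec-A-number} combinatorially. Write $\mathcal{T}_{n,k}$ for the set of trees in $\mathcal{T}_n$ with exactly $k$ leaves; since there are $N(n-1,k)$ plane trees on $n$ nodes with $k$ leaves and $n!$ ways to place the labels $[n]$, we have $|\mathcal{T}_{n,k}|=n!\,N(n-1,k)$. Multiplying \eqref{rec-A-number} by $(n+1)!$ then turns it into the equivalent statement
\[
  |\mathcal{T}_{n+2,k}| \;=\; (n+2k)\,|\mathcal{T}_{n+1,k}| \;+\; (3n+4-2k)\,|\mathcal{T}_{n+1,k-1}|,
\]
and it suffices to prove this.

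To do so, re-index the insertion algorithm so that it builds $\mathcal{T}_{n+2}$ from $\mathcal{T}_{n+1}$ by inserting the node $\mathbf{n+2}$. Since the algorithm together with the deletion procedure is a bijection between $\mathcal{T}_{n+2}$ and the set of pairs consisting of a tree $T\in\mathcal{T}_{n+1}$ and one of its $4n+2$ admissible moves ($n+1$ choices for N1, $n+1$ for N2, $n$ for E1, $n$ for E2), I would fix $T\in\mathcal{T}_{n+1}$ with $j$ leaves (hence $n+1-j$ interior nodes and $n$ edges) and count how many of the $4n+2$ trees produced from $T$ still have $j$ leaves and how many have $j+1$. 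Inspecting the moves: N1 or N2 performed at a leaf of $T$ leaves the number of leaves unchanged; N1 or N2 performed at an interior node raises it by $1$; E1 raises it by $1$ (a fresh leaf $\mathbf{n+2}$ is appended to an already interior node); and E2 leaves it unchanged, since it merely splits the interior node $\mathbf{i}$ into the two interior nodes $\mathbf{n+2}$ and the new $\mathbf{i}$ and creates no new leaf. Hence $2j+n$ moves preserve the leaf count and $2(n+1-j)+n=3n+2-2j$ moves raise it by one; the total $4n+2$ is a sanity check, and summed over all $T$ it recovers the Catalan recurrence $(n+2)C_{n+1}=(4n+2)C_n$.

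Summing over all $T$ and sorting by the number of leaves, a tree in $\mathcal{T}_{n+2,k}$ arises either from a tree in $\mathcal{T}_{n+1,k}$ by one of its $2k+n$ leaf-preserving moves, or from a tree in $\mathcal{T}_{n+1,k-1}$ by one of its $3n+2-2(k-1)=3n+4-2k$ leaf-raising moves; this is exactly the displayed identity, hence \eqref{rec-A-number}. Finally, multiplying \eqref{rec-A-number} by $x^k$, summing over $k$, and using $\sum_k k\,N(n,k)x^k=x\,(N_n^A(x))'$, one gets $\sum_k(n+2k)N(n,k)x^k=nN_n^A(x)+2x(N_n^A(x))'$ and $\sum_k(3n+4-2k)N(n,k-1)x^k=(3n+2)x\,N_n^A(x)-2x^2(N_n^A(x))'$; adding these gives \eqref{rec-A-poly}.

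The only place that needs genuine care is the leaf-count bookkeeping for the moves, and in particular for E2: one must check that after relabeling $\mathbf{i}$ as $\mathbf{n+2}$ and hanging a new copy of $\mathbf{i}$ (carrying $\mathbf{j}$ together with its elder siblings) as the old child of $\mathbf{n+2}$, the former interior node $\mathbf{i}$ has turned into exactly two interior nodes while no other node has changed its leaf-or-interior status, so that $k$ is truly preserved. The analogous checks for N1, N2 and E1 are easier, but each still requires splitting into the cases ``the chosen node is a leaf'' and ``the chosen node is interior''; this split is legitimate because the deletion algorithm was already shown to be well defined, so the $4n+2$ moves out of $T$ are pairwise distinct and together exhaust $\mathcal{T}_{n+2}$.
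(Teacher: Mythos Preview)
Your argument is correct and follows essentially the same approach as the paper: both proofs count, for a labeled plane tree on $n+1$ nodes with a given number of leaves, how many of the insertion moves N1, N2, E1, E2 preserve the leaf count (namely N1/N2 at a leaf and E2 at any edge, giving $2k+n$) versus increase it by one (N1/N2 at an interior node and E1 at any edge, giving $3n+4-2k$). You additionally spell out the passage from \eqref{rec-A-number} to \eqref{rec-A-poly}, which the paper leaves implicit, and your derivation there is also correct.
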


\begin{proof}
	It is known that the number of labeled plane trees on $n+1$ nodes with~$k$ leaves are given by $(n+1)! N(n,k)$. We shall show that labeled plane trees on $n+2$ nodes with $k$ leaves can be obtained from labeled plane trees on $n+1$ nodes via the following two cases:
	\begin{itemize}
		\item Given a labeled plane tree on $n+1$ nodes with $k$ leaves, we have the option to perform insertion either in Case E2 for every edge in the tree, or in Case N1 and Case N2 for every leaf. Thus the number in this case is $n+2k$.
		\item Given a labeled plane tree on $n+1$ nodes with $k-1$ leaves, we have the option to perform insertion either in Case E1 for every edge in the tree, or in Case N1 and Case N2 for every non-leaf node. Thus the number in this case is $n+2(n+1-k+1)=3n+4-2k$.
	\end{itemize}
	This completes the proof of  \eqref{rec-A-number}.
\end{proof}
Note that the recurrence relation \eqref{rec-A-number}
was found by Ma, Ma, and Yeh~\cite[Lemma 9]{Ma2019gamma}, and \eqref{rec-A-poly} was found by Chen, Yang, and Zhao~\cite{Chen2022Recurrences}.

{\color{red}

}
\section{A context-free Grammar for Narayana Polynomials} \label{section-3}

\tikzstyle{every node}=[circle,inner sep=1pt,fill=white!60]
\tikzstyle{tn}=[shape=circle, draw, color=black!70]
\tikzstyle{marker}=[shape=circle,minimum size=0.2cm, draw,blue]

In this section, we first introduce the notion of \emph{improper edges} on labeled plane trees and then define a context-free grammar that can be used to generate labeled plane trees.
We use the grammar to generalize $N_n^A(x)$ and $N_n^B(x)$ to $\widetilde{N}^A_n(x,y,s,t)$ and $\widetilde{N}^B_n(x,y,s,t)$ as shown in \eqref{eq-A} and \eqref{NBSimple}, respectively.
Besides, as an application of our grammar, we give a grammatical proof of a convolution identity and a formula of the generating function of $N_n^A(x)$ and $N_n^B(x)$. For the sake of convenience, we consider a homogeneous version of $N_n^A(x)$ and $N_n^B(x)$ as
\[
	N_n^A(x,y)=\sum_{k=1}^n  \frac{1}{n}{n\choose k}{n\choose k-1} x^ky^{n-k+1},
\]
and
$$
	N_n^B(x,y)=\sum_{k=0}^n  {n\choose k}^2 x^{k}y^{n-k}.
$$
Here we set $N_0^A(x,y)=y.$



Inspired by Shor's work~\cite{Shor1995new} on Cayley trees, we define improper edges on labeled plane trees as follows.
Given a labeled plane tree $T$, we define the  $\beta$-value of a node $\mathbf{j}$, denoted $\beta(\mathbf{j})$,  as the smallest label on any node in the subtree rooted at $\mathbf{i}$ (it is possible that $\beta(\mathbf{j})={j}$).
Meanwhile, we define the $\alpha$-value of a node $\mathbf{j}$, denoted $\alpha(\mathbf{j})$, to be the minimum among the label of its father and the $\beta$-values of its elder brothers. Precisely, let $\mathbf{i}$ be the father of $\mathbf{j}$ and suppose that  $\mathbf{k_1},\mathbf{k_2},\cdots,\mathbf{k_{t-1}}, \mathbf{j},\mathbf{k_{t+1}},\cdots,\mathbf{k_m}$ are the children of $\mathbf{i}$ listed  from left to right. Then $\alpha(\mathbf{j})$ is defined as
\[
	\alpha(\mathbf{j})=\min\left( {i}, \beta(\mathbf{k_1}),\beta(\mathbf{k_2}),\cdots,\beta(\mathbf{k_{t-1}})\right).
\]
The edge $e=(\mathbf{i},\mathbf{j})$ is called \emph{proper} if $\alpha(\mathbf{j})<\beta(\mathbf{j})$; otherwise, it is called \emph{improper}.
Take the tree~$T$ in Figure~\ref{Figure_label} for instance.
The edge $(\mathbf{4},\mathbf{2})$ is an improper edge since $\alpha(\mathbf{2})=4$ and $\beta(\mathbf{2})=2$. Meanwhile, the edge $(\mathbf{6},\mathbf{4})$ is a proper edge since since $\alpha(\mathbf{4})=1$ and $\beta(\mathbf{4})=2$.

Notice that Cayley trees can be considered as labeled plane trees where the children of each node are arranged in increasing order of their $\beta$-values.
It is worth noting that an edge $e=(\mathbf{i},\mathbf{j})$ in a Cayley tree $T$ is improper if and only  $i>\beta(j)$.
Thus the definition of improper edges can be viewed as a generalization of the definition of Cayley trees given by Shor.

Let $\widetilde{N}(n,k,r)$ denote the number of labeled plane trees on $[n+1]$ with $k$ leaves and $r$ improper edges. We define a homogeneous multivariate polynomial:
\[
	\widetilde{N}_n^A(x,y,s,t)=\sum_{k=1}^{n} \sum_{r=0}^n  \widetilde{N}(n,k,r) s^{n-r} t^{r}x^{k} y^{n-k+1}.
\]
Especially, we set $\widetilde{N}_0^A(x,y,s,t)=y$. It can be seen that
\begin{align}\label{eq-A}
	\widetilde{N}_n^A(x,y,1,1)=(n+1)!N_n^A(x,y).
\end{align}


Following Chen~\cite{Chen1993Context},
a context-free grammar $G$ over an alphabet set $A=\{x, y, z, \ldots \}$ of variables is a collection of substitution rules that replace a variable in $A$ with a Laurent polynomial of variables in $A$.
The formal derivative $D$ associated with a context-free grammar $G$ over $A$ is a linear operator that acts on Laurent polynomials with variables in $A$ and  satisfies the following relations for each substitution rule:
$$\begin{aligned} & D(u+v)=D(u)+D(v), \\ & D(u v)=D(u) v+u D(v) .\end{aligned}$$
Dumont and Ramamonjisoa~\cite{Dumont1996Grammaire} found a grammar
$
	\{A\rightarrow A^3S,S\rightarrow AS^2\}
$
to generate Cayley trees.

We next pose the following grammar on the alphabet $A=\{s,t,x,y\}$,
\begin{equation}\label{grammar_simple}
	G: = \{
	s\rightarrow s(sx+ty), t\rightarrow t(sx+ty),
	x\rightarrow (s+t)xy,y\rightarrow (s+t)xy\}
\end{equation}
to generate labeled plane trees, as well as Narayana polynomials.
Let $D$ denote the formal derivative concerning the grammar \eqref{grammar_simple}.
The next theorem shows the relation between the grammar \eqref{grammar_simple} and labeled rooted trees.
\begin{thm}
	For any integer $n\geq0$, we have
	\begin{equation}\label{eq:Ma_2}
		D^n(y)=\widetilde{N}_n^A(x,y,s,t).
	\end{equation}
\end{thm}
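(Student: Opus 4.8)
The plan is to prove \eqref{eq:Ma_2} by induction on $n$ via a grammatical labeling of labeled plane trees, chosen so that its weight generating function is $\widetilde{N}_n^A$ and so that one application of the formal derivative $D$ of \eqref{grammar_simple} mirrors the insertion algorithm of Section~\ref{section-2}.

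First I would fix the labeling. To $T\in\mathcal{T}_{n+1}$ attach the monomial $w(T)$ obtained by multiplying together a factor $x$ for each leaf of $T$ other than the root, a factor $y$ for every remaining node, a factor $s$ for every proper edge, and a factor $t$ for every improper edge; thus $w(T)=y$ for the one-node tree. Since $T$ has $n+1$ nodes and $n$ edges, and for $n\ge 1$ its root is not a leaf, we get $w(T)=s^{n-r}t^{r}x^{k}y^{n-k+1}$ when $T$ has $k$ leaves and $r$ improper edges; comparing with the definition of $\widetilde{N}_n^A$ (and noting $w(T)=y=\widetilde{N}_0^A$ when $n=0$) shows $\sum_{T\in\mathcal{T}_{n+1}}w(T)=\widetilde{N}_n^A(x,y,s,t)$ for every $n\ge 0$. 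It thus suffices to prove $D^{n}(y)=\sum_{T\in\mathcal{T}_{n+1}}w(T)$, which I would establish by induction on $n$; the base case $n=0$ is trivial. For the inductive step, recall from Section~\ref{section-2} that the insertion and deletion algorithms show that every tree in $\mathcal{T}_{n+2}$ arises from a unique $T\in\mathcal{T}_{n+1}$ by a uniquely determined one of the operations N1, N2 at a node of $T$, or E1, E2 at an edge of $T$. Since $D$ is a derivation, $D\bigl(\sum_{T\in\mathcal{T}_{n+1}}w(T)\bigr)=\sum_{T\in\mathcal{T}_{n+1}}D(w(T))$, so it is enough to prove, for each fixed $T\in\mathcal{T}_{n+1}$, that $D(w(T))$ equals the sum of $w(T')$ over the $2(n+1)+2n$ trees $T'$ obtained from $T$ by the four operations.

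The core step is to evaluate $w(T')/w(T)$ for each operation. The key observation is that the new node $\mathbf{n+2}$ carries the largest label, so it never realizes the minimum of any subtree; moreover, in N2 and E2, when a node $\mathbf{i}$ is relabeled $\mathbf{n+2}$ a node labeled $\mathbf{i}$ is reinstated in the same subtree. Consequently every $\beta$-value of an old node is unchanged, hence so is every $\alpha$-value, and one checks operation by operation that all old edges keep their proper/improper type and all old nodes keep their leaf/interior type, with these exceptions: in N1 the chosen node, if it was a leaf, becomes interior; in N2 the chosen node is replaced by the interior node $\mathbf{n+2}$ together with a new non-root leaf; in E2 the chosen node, necessarily interior, is split into two interior nodes. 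Furthermore $\mathbf{n+2}$ is a non-root leaf in N1 and E1 and an interior node in N2 and E2, and the new edge runs from a smaller-labeled parent down to $\mathbf{n+2}$ in N1 and E1 (hence is proper) and from $\mathbf{n+2}$ down to its new leftmost child in N2 and E2 (hence is improper). Collecting all of this, applying N1 at a node $\mathbf{i}$ multiplies $w(T)$ by $sy$ if $\mathbf{i}$ carries $x$ and by $sx$ if $\mathbf{i}$ carries $y$; applying N2 at $\mathbf{i}$ multiplies by $ty$ or $tx$ in the same two cases; applying E1 at an edge multiplies by $sx$; and applying E2 at an edge multiplies by $ty$. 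Writing $\ell,m$ for the exponents of $x,y$ in $w(T)$, so that $\ell+m=n+1$ and $T$ has $n$ edges, summation over all $2(n+1)+2n$ operations gives
\begin{equation*}
	\sum_{T'}w(T')=w(T)\Bigl[(s+t)(\ell y+m x)+n(sx+ty)\Bigr],
\end{equation*}
whereas a direct computation with \eqref{grammar_simple} gives $D(w(T))=w(T)\bigl[n(sx+ty)+\ell(s+t)y+m(s+t)x\bigr]$. These two expressions agree, which completes the induction and proves \eqref{eq:Ma_2}.

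The part I expect to be the real work is the bookkeeping in the previous paragraph: checking that each of the four operations preserves the $\alpha$- and $\beta$-values of the old nodes --- most delicately for E2, where a node is relabeled and an entire family of subtrees is rearranged --- and hence preserves the proper/improper status of every old edge and the leaf/interior status of every old node, apart from the handful of changes listed. Everything else comes down to the routine monomial computation with the grammar.
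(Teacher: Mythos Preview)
Your proof is correct and follows essentially the same route as the paper: both introduce the grammatical labeling $s^{prop}t^{imp}x^{\ell}y^{i}$, verify that the four insertion cases N1, N2, E1, E2 correspond exactly to the four substitution rules of the grammar $G$ (preserving the proper/improper status of all old edges), and conclude by induction. Your write-up is in fact a bit more explicit than the paper's in checking that $\alpha$- and $\beta$-values of old nodes are preserved under each operation and in carrying out the final monomial comparison, and your convention of labeling the root by $y$ even when $n=0$ streamlines the base case; these are presentational differences only.
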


\pf It is evident for $n=0$. For $n\geq1$, to complete the proof, we introduce a grammatical labeling of labeled rooted trees. For a tree $T\in\mathcal{T}_{n}$, we label each leaf by $x$ and label each interior node by $y$. Meanwhile, we label each proper edge by $t$ and label each improper edge by  $s$. Finally, we define the weight of $T$ to be the product of all the labels on $T$, namely,
\[
	wt(T)= s^{prop(T)}t^{imp(T)}x^{\ell(T)}y^{i(T)}.
\]
Here we use $prop(T)$,  $imp(T)$, $\ell(T)$, and $i(T)$ to denote the number of proper edges, improper edges, leaves, and interior nodes in $T$, respectively.
For instance, the tree $T$ has the grammatical labeling as shown in Figure \ref{Figure_label} and  $wt(T)=s^3 t^3 x^{4}y^3$.

\begin{figure}[h]
	\begin{center}
		\begin{tikzpicture}[scale=1]
			\node[tn,label=90:{$6(y)$}]{}[grow=down]
			[sibling distance=20mm,level distance=12mm]
			child {node [tn,label=180:{$3(y)$}](four){}[sibling distance=17mm]
					child {node [tn,label=180:{$1(x)$}](one){}
							edge from parent node[above left]{$t$}
						}
					child {node [tn,label=180:{$7(x)$}](two){}
							edge from parent node[above right ]{$s$}
						}
					edge from parent node[above left ]{$t$}
				}
			child {node [tn,label=180:{$5(x)$}](four){}
					edge from parent node[right ]{$s$}
				}
			child {node [tn,label=0:{$4(y)$}](four){}
					child {node [tn,label=0:{$2(x)$}](one){}
							edge from parent node[right ]{$t$}
						}
					edge from parent node[above right ]{$s$}
				};
		\end{tikzpicture}
		\caption{A labeled plane tree with its grammatical labeling}\label{Figure_label}
	\end{center}
\end{figure}
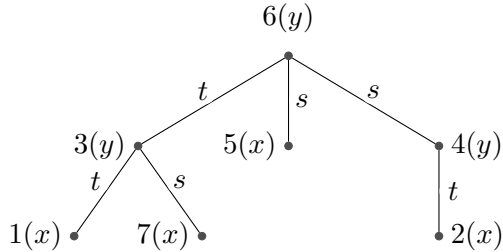
This implies that \eqref{eq:Ma_2} is equivalent to
\begin{equation}\label{eq:wt}
	D^n(y)=\sum_{T\in \mathcal{T}_{n+1}}wt(T).
\end{equation}
We prove the relation \eqref{eq:wt} by induction on $n$. For $n=1$, \eqref{eq:wt} holds since the two trees in $\mathcal{T}_2$ are with labels $sxy$ and $txy$. Now we assume that \eqref{eq:wt} holds for $n-1$, that is,
\[
	D^{n-1}(y)=\sum_{T\in \mathcal{T}_{n}}wt(T).
\]
To show that \eqref{eq:wt} holds for $n$, we turn to establish the relationship between labeled rooted trees and grammar \eqref{grammar_simple}.

Let us recall the four cases of insertion steps of the node $\mathbf{n+1}$. Here we view the insertion in Case N1 and Case N2 as an action on a node and view the insertion in Case E1 and Case E2 as an action on an edge. Consider a tree $T$ in $\mathcal{T}_{n}$.

If we add a new node $\mathbf{n+1}$ as in Case N1 and Case N2, we first choose a node $\mathbf{i}$, labeled by $x$ or $y$.
\begin{itemize}
	\item When we add $\mathbf{n+1}$ to be the old leaf of $\mathbf{i}$ as Case N1, we assign $x$ to the new leaf $\mathbf{n+1}$, and assign $s$ to the new edge $(\mathbf{i},\mathbf{n+1})$. We label  the node $\mathbf{i}$ by $y$, no matter $y$ or $x$ is assigned to  $\mathbf{i}$  in $T$.

	\item When we add $\mathbf{n+1}$ to be the father node of $\mathbf{i}$ as Case N2, we assign $y$ to the new leaf~$\mathbf{n+1}$, and assign $t$ to the new edge $(\mathbf{n+1},\mathbf{i})$. We label the node $\mathbf{i}$ by $x$, no matter $y$ or $x$ is assigned to  $\mathbf{i}$  in $T$.
\end{itemize}
Combining both two cases, whether the target node is labeled by a letter $y$ or $x$, we get $zwt+zws$, which is equivalent to the action of the substitution rule $y\rightarrow (s+t)xy$ or  $x\rightarrow (s+t)xy$.

For instance, if we apply $y\rightarrow (s+t)xy$ to the label $y$ on the node $\mathbf{6}$, we get  two new trees $T_1$ and $T_2$ in $\mathcal{T}_{8}$.
Clearly, the trees $T_1$ and $T_2$ are labeled by consistent grammatical labels as shown in Figure \ref{Figure_Addn}.

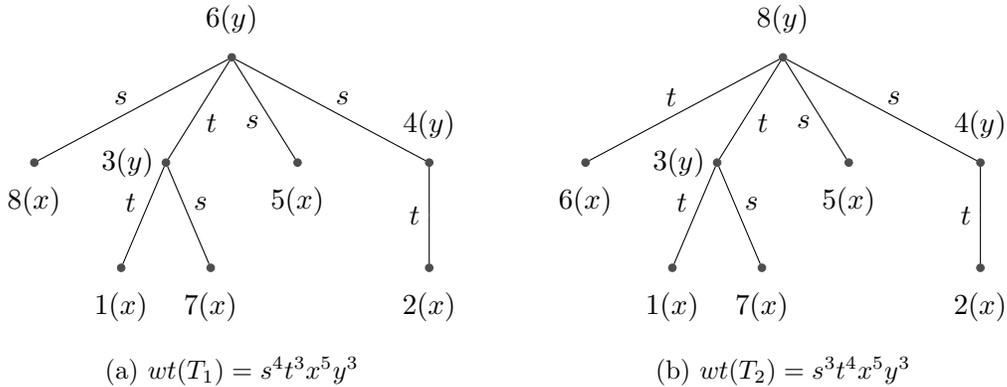
\begin{figure}[ht]
	\subcaptionbox{$wt(T_1)=s^4t^3x^5y^3$}[.5\textwidth]{%
		\begin{tikzpicture}[ scale=0.7]
			\node[tn,label=90:{$6(y)$}] {}[grow=down]
			[sibling distance=25mm,level distance=20mm]
			child {node [tn,label=270:{$8(x)$}]{}
					edge from parent node[above left=1pt ]{$s$}
				}
			child {node [tn,label=180:{$3(y)$}]{}[sibling distance=17mm]
					child {node [tn,label=270:{$1(x)$}]{}
							edge from parent node[above left=1pt ]{$t$}
						}
					child {node [tn,label=270:{$7(x)$}]{}
							edge from parent node[above right=1pt ]{$s$}
						}
					edge from parent
					node[below right=1pt ]{$t$}
				}
			child {node [tn,label=270:{$5(x)$}]{}
					edge from parent node[below left=1pt]{$s$}
				}
			child {node [tn,label=90:{$4(y)$}](four){}
					child {node [tn,label=270:{$2(x)$}]{}
							edge from parent node[left ]{$t$}
						}
					edge from parent node[above right=1pt ]{$s$}
				};
		\end{tikzpicture}
	}
	\subcaptionbox{$wt(T_2)=s^3t^4x^5y^3$}[.4\textwidth]{%
		\begin{tikzpicture}[ scale=0.7]
			\node[tn,label=90:{$8(y)$}] {}[grow=down]
			[sibling distance=25mm,level distance=20mm]
			child {node [tn,label=270:{$6(x)$}]{}
					edge from parent node[above left=1pt ]{$t$}
				}
			child {node [tn,label=180:{$3(y)$}]{}[sibling distance=17mm]
					child {node [tn,label=270:{$1(x)$}]{}
							edge from parent node[above left=1pt ]{$t$}
						}
					child {node [tn,label=270:{$7(x)$}]{}
							edge from parent node[above right=1pt ]{$s$}
						}
					edge from parent
					node[below right=1pt ]{$t$}
				}
			child {node [tn,label=270:{$5(x)$}]{}
					edge from parent node[below left=1pt]{$s$}
				}
			child {node [tn,label=90:{$4(y)$}](four){}
					child {node [tn,label=270:{$2(x)$}]{}
							edge from parent node[left ]{$t$}
						}
					edge from parent node[above right=1pt ]{$s$}
				};
		\end{tikzpicture}
	}
	\caption{An examples for $y\rightarrow (s+t)xy$}\label{Figure_Addn}
\end{figure}

If we apply $x\rightarrow (s+t)xy$ to the label $x$ on the node $\mathbf{5}$, we get  two new trees $T_3$ and $T_4$ in $\mathcal{T}_{8}$. The trees $T_3$ and $T_4$ are labeled by consistent grammatical labels as shown in Figure \ref{Figure_Addn2}.

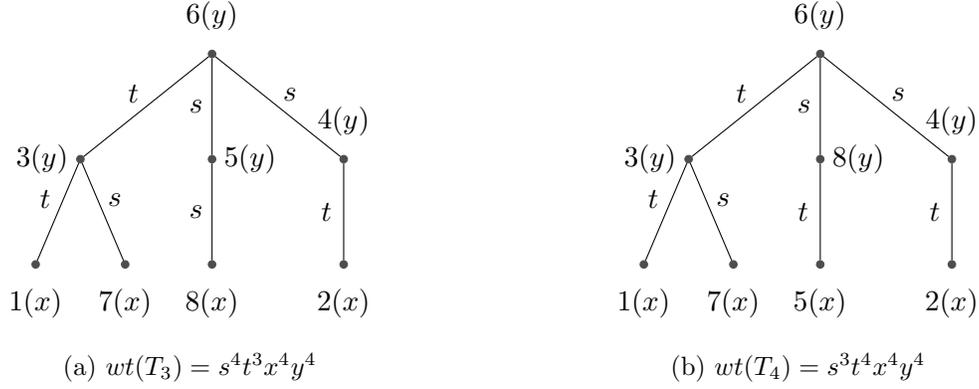
\begin{figure}[h]
	\subcaptionbox{$wt(T_3)=s^4 t^3 x^4 y^4$}[.5\textwidth]{%
		\begin{tikzpicture}[ scale=0.7]
			\node[tn,label=90:{$6(y)$}] {}[grow=down]
			[sibling distance=25mm,level distance=20mm]
			child {node [tn,label=180:{$3(y)$}]{}[sibling distance=17mm]
					child {node [tn,label=270:{$1(x)$}]{}
							edge from parent node[above left=1pt ]{$t$}
						}
					child {node [tn,label=270:{$7(x)$}]{}
							edge from parent node[above right=1pt ]{$s$}
						}
					edge from parent node[above left=1pt ]{$t$}
				}
			child {node [tn,label=0:{$5(y)$}]{}
					child {node [tn,label=270:{$8(x)$}]{}
							edge from parent node[left=1pt ]{$s$}
						}
					edge from parent node[left=1pt ]{$s$}
				}
			child {node [tn,label=90:{$4(y)$}]{}
					child {node [tn,label=270:{$2(x)$}]{}
							edge from parent node[left=1pt ]{$t$}
						}
					edge from parent node[above right=1pt ]{$s$}
				};
		\end{tikzpicture}
	}
	\subcaptionbox{$wt(T_4)=s^3 t^4 x^4 y^4$}[.5\textwidth]{%
		\begin{tikzpicture}[ scale=0.7]
			\node[tn,label=90:{$6(y)$}] {}[grow=down]
			[sibling distance=25mm,level distance=20mm]
			child {node [tn,label=180:{$3(y)$}]{}[sibling distance=17mm]
					child {node [tn,label=270:{$1(x)$}]{}
							edge from parent node[above left=1pt ]{$t$}
						}
					child {node [tn,label=270:{$7(x)$}]{}
							edge from parent node[above right=1pt ]{$s$}
						}
					edge from parent node[above left=1pt ]{$t$}
				}
			child {node [tn,label=0:{$8(y)$}]{}
					child {node [tn,label=270:{$5(x)$}]{}
							edge from parent node[left=1pt ]{$t$}
						}
					edge from parent node[left=1pt ]{$s$}
				}
			child {node [tn,label=90:{$4(y)$}]{}
					child {node [tn,label=270:{$2(x)$}]{}
							edge from parent node[left=1pt ]{$t$}
						}
					edge from parent node[above right=1pt ]{$s$}
				};
		\end{tikzpicture}
	}
	\caption{An examples for $x\rightarrow (s+t)xy$}\label{Figure_Addn2}
\end{figure}
If we add a new node $\mathbf{n+1}$ as in Case E1 and Case E2, we first choose an edge $(\mathbf{i},\mathbf{j})$, which is labeled by a letter $t$ or $s$.
\begin{itemize}
	\item When we add $\mathbf{n+1}$ to be the younger brother of $\mathbf{j}$ as Case E1, we keep all old labels on~$T$, assign $x$ to the new leaf $\mathbf{n+1}$, and assign $s$ to the new proper edge $(\mathbf{i},\mathbf{n+1})$.

	\item When we add $\mathbf{n+1}$ to be the father of $\mathbf{i}$ as Case E2, we keep all old labels on $T$, assign $y$ to the new interior node $\mathbf{n+1}$, and assign $t$ to the new edge $(\mathbf{n+1},\mathbf{i})$.
\end{itemize}
Combining both two cases,  whether the target edge is labeled by a letter $t$ or $s$, we obtain new labels $sx+ty$, which is equivalent to the action of the substitution rule $s\rightarrow s(sx+ty)$ or  $t\rightarrow t(sx+ty)$.

For instance, if we apply $s\rightarrow s(sx+ty)$ to the label $s$ on the edge $(\mathbf{3},\mathbf{7})$, we get two new trees $T_5$ and $T_6$ in $\mathcal{T}_{8}$.
Clearly, the trees $T_5$ and $T_6$ are labeled by consistent grammatical labels as shown in Figure \ref{Figure_Addn3}.

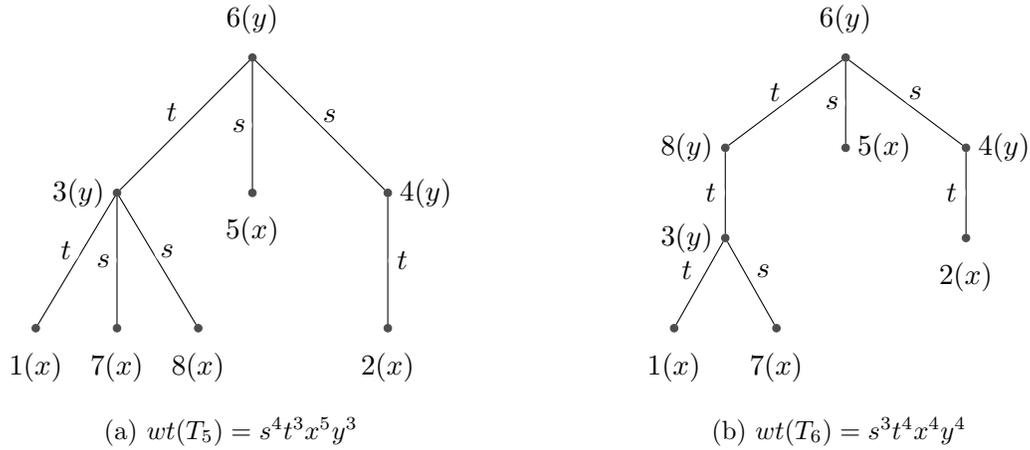
\begin{figure}[h]
	\subcaptionbox{$wt(T_5)=s^4 t^3 x^5y^3$}[.5\textwidth]{%
		\begin{tikzpicture}[ scale=0.9]
			\node[tn,label=90:{$6(y)$}] {}[grow=down]
			[sibling distance=20mm,level distance=20mm]
			child {node [tn,label=180:{$3(y)$}]{}[sibling distance=12mm]
					child {node [tn,label=270:{$1(x)$}]{}
							edge from parent node[above left ]{$t$}
						}
					child {node [tn,label=270:{$7(x)$}]{}
							edge from parent node[left ]{$s$}
						}
					child {node [tn,label=270:{$8(x)$}]{}
							edge from parent node[above right ]{$s$}
						}
					edge from parent node[above left=1pt ]{$t$}
				}
			child {node [tn,label=270:{$5(x)$}]{}
					edge from parent node[left ]{$s$}
				}
			child {node [tn,label=0:{$4(y)$}](four){}
					child {node [tn,label=270:{$2(x)$}]{}
							edge from parent node[right ]{$t$}
						}
					edge from parent node[above right ]{$s$}
				};
		\end{tikzpicture}
	}
	\subcaptionbox{$wt(T_6)=s^3 t^4 x^4 y^4$}[.5\textwidth]{%
		\begin{tikzpicture}[scale=0.8]
			\node[tn,label=90:{$6(y)$}] {}[grow=down]
			[sibling distance=20mm,level distance=15mm]

			child {node [tn,label=180:{$8(y)$}]{}[sibling distance=17mm]
					child {node [tn,label=180:{$3(y)$}]{}
							child {node [tn,label=270:{$1(x)$}]{}
									edge from parent node[above left=1pt ]{$t$}
								}
							child {node [tn,label=270:{$7(x)$}]{}
									edge from parent node[above right=1pt ]{$s$}
								}
							edge from parent node[left ]{$t$}
						}
					edge from parent node[above left ]{$t$}
				}
			child {node [tn,label=0:{$5(x)$}]{}
					edge from parent node[left ]{$s$}
				}
			child {node [tn,label=0:{$4(y)$}]{}
					child {node [tn,label=270:{$2(x)$}]{}
							edge from parent node[left ]{$t$}
						}
					edge from parent node[above right ]{$s$}
				};
		\end{tikzpicture}
	}
	\caption{An examples for $s\rightarrow s(sx+ty)$}\label{Figure_Addn3}

\end{figure}

If we apply $t\rightarrow t(sx+ty)$ to the label $t$ on the edge $(\mathbf{6},\mathbf{3})$, we get two new trees $T_7$ and $T_8$ in $\mathcal{T}_{8}$.
Clearly, the trees $T_7$ and $T_8$ are labeled by consistent grammatical labels as shown in Figure \ref{Figure_Addn4}.

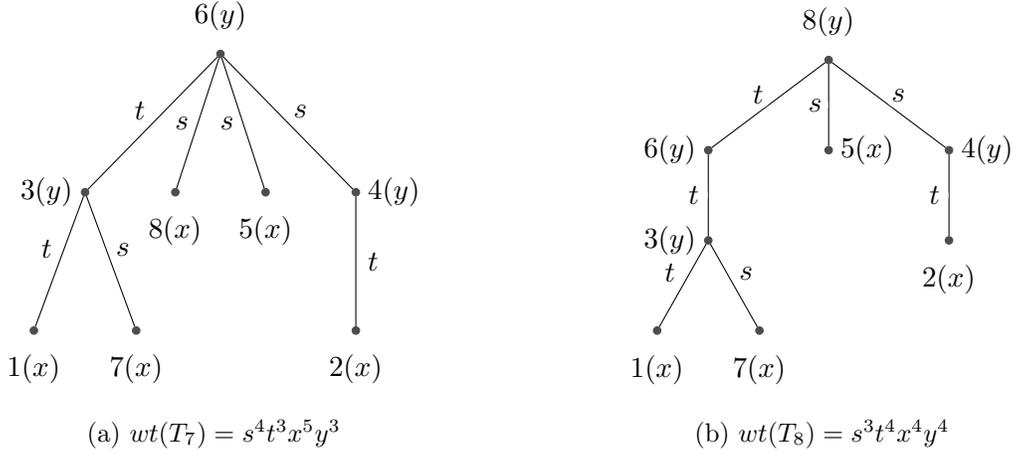
\begin{figure}[h!]
	\subcaptionbox{$wt(T_7)=s^4 t^3 x^5 y^3$}[.5\textwidth]{%
		\begin{tikzpicture}[ scale=0.8]
			\node[tn,label=90:{$6(y)$}] {}[grow=down]
			[sibling distance=15mm,level distance=23mm]
			child {node [tn,label=180:{$3(y)$}]{}[sibling distance=17mm]
					child {node [tn,label=270:{$1(x)$}]{}
							edge from parent node[above left=1pt ]{$t$}
						}
					child {node [tn,label=270:{$7(x)$}]{}
							edge from parent node[above right=1pt ]{$s$}
						}
					edge from parent node[above left=1pt ]{$t$}
				}
			child {node [tn,label=270:{$8(x)$}]{}
					edge from parent node[ left=1pt ]{$s$}
				}
			child {node [tn,label=270:{$5(x)$}]{}
					edge from parent node[ left=1pt ]{$s$}
				}
			child {node [tn,label=0:{$4(y)$}](four){}
					child {node [tn,label=270:{$2(x)$}]{}
							edge from parent node[right=1pt ]{$t$}
						}
					edge from parent node[above right=1pt ]{$s$}
				};
		\end{tikzpicture}
	}
	\subcaptionbox{$wt(T_8)=s^3 t^4 x^4 y^4$}[.5\textwidth]{%
		\begin{tikzpicture}[scale=0.8]
			\node[tn,label=90:{$8(y)$}] {}[grow=down]
			[sibling distance=20mm,level distance=15mm]

			child {node [tn,label=180:{$6(y)$}]{}[sibling distance=17mm]
					child {node [tn,label=180:{$3(y)$}]{}
							child {node [tn,label=270:{$1(x)$}]{}
									edge from parent node[above left=1pt ]{$t$}
								}
							child {node [tn,label=270:{$7(x)$}]{}
									edge from parent node[above right=1pt ]{$s$}
								}
							edge from parent node[left ]{$t$}
						}
					edge from parent node[above left ]{$t$}
				}
			child {node [tn,label=0:{$5(x)$}]{}
					edge from parent node[left ]{$s$}
				}
			child {node [tn,label=0:{$4(y)$}]{}
					child {node [tn,label=270:{$2(x)$}]{}
							edge from parent node[left ]{$t$}
						}
					edge from parent node[above right ]{$s$}
				};
		\end{tikzpicture}
	}
	\caption{An examples for $t\rightarrow t(sx+ty)$}\label{Figure_Addn4}

\end{figure}

Above all, the action of the operator $D$ on $wt(T)$ is equivalent to the insertion of $\mathbf{n+1}$ into~$T$. Thus we have
\[
	D^n(y)=D\left(\sum_{T\in \mathcal{T}_{n}}wt(T)\right)=
	\sum_{T^*\in \mathcal{T}_{n+1}}wt(T^*).
\]
This completes the proof.\qed

Next, we give a combinatorial description of Narayana polynomials of Type B in the notion of labeled rooted trees.
Let $\mathcal{T}^{*}_{n}$ denote the set of labeled plane trees on $[n+2]$ in which $\mathbf{1}$ is the old child of $\mathbf{2}$.
Let $\widetilde{N}_B(n,k,r)$ denote the number of labeled rooted trees in $\mathcal{T}^{*}_{n}$ with $k$ leaves and $r$ improper edges. Define $\widetilde{N}^B_n(x,y,s,t)$ as follows:
\[
	\widetilde{N}^B_n(x,y,s,t)=\sum_{k=0}^n  \sum_{r=0}^{n+1}\widetilde{N}_B(n,k,r)s^{n+1-r}t^r x^{k-1}y^{n-k+1}.
\]

\begin{thm}\label{thm-NB}
	For $n\geq0$, it holds that
	\begin{equation}\label{NBSimple}
		\widetilde{N}^B_n(x,y,t,t) =n!t^{n+1}N_n^B(x,y).
	\end{equation}
\end{thm}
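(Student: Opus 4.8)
The plan is to reduce \eqref{NBSimple} to counting the trees of $\mathcal{T}^*_n$ by number of leaves, and to establish that count with the insertion algorithm of Section~\ref{section-2}, in the spirit of the proof of \eqref{eq:Ma_2}. First, $\widetilde{N}^B_n(x,y,s,t)$ is homogeneous of degree $n+1$ in $(s,t)$, so $\widetilde{N}^B_n(x,y,t,t)=t^{n+1}\widetilde{N}^B_n(x,y,1,1)$. If $a(n,k):=\sum_{r}\widetilde{N}_B(n,k,r)$ denotes the number of trees of $\mathcal{T}^*_n$ with exactly $k$ leaves, then $\widetilde{N}^B_n(x,y,1,1)=\sum_{k}a(n,k)\,x^{k-1}y^{n-k+1}$, and since $N_n^B(x,y)=\sum_{j}\binom{n}{j}^2x^jy^{n-j}$, identity \eqref{NBSimple} is equivalent to
\[
	a(n,k)=n!\binom{n}{k-1}^2\qquad(1\le k\le n+1),
\]
with the base case $a(0,1)=1$. (Equivalently, paralleling \eqref{eq:Ma_2}, one can show $D^n(t)=\widetilde{N}^B_n(x,y,s,t)$: under the grammatical labeling of Section~\ref{section-3} the unique tree of $\mathcal{T}^*_0$ has weight $txy$, the letter $x$ at $\mathbf{1}$ and $y$ at $\mathbf{2}$ are exactly those the insertion never modifies, and the always-improper edge $(\mathbf{2},\mathbf{1})$ evolves like a formal $t$; I will, however, argue directly.)

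To get a recurrence for $a(n,k)$, I would restrict the insertion algorithm of Section~\ref{section-2} to $\mathcal{T}^*$: a tree of $\mathcal{T}^*_n$ is obtained from a tree $T\in\mathcal{T}^*_{n-1}$ by inserting node $\mathbf{n+2}$, and the insertion keeps $\mathbf{1}$ the leftmost leaf of $\mathbf{2}$ precisely when it is \emph{not} an N1 or N2 performed at node $\mathbf{1}$ or node $\mathbf{2}$; every other N1, N2, and \emph{every} E1, E2, is admissible. Since the deletion algorithm of Section~\ref{section-2} carries $\mathcal{T}^*_n$ into $\mathcal{T}^*_{n-1}$, the admissible moves biject $\mathcal{T}^*_n$ with the admissible insertions into $\mathcal{T}^*_{n-1}$. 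Recording the effect on the number of leaves — N1 and N2 add $0$ leaves at a leaf and $1$ at an interior node, E1 adds $1$, E2 adds $0$ — a tree of $\mathcal{T}^*_{n-1}$ with $k$ leaves (so $n+1-k$ interior nodes and $n$ edges) admits $(k-1)+(k-1)+n$ admissible leaf-preserving moves (N1, N2 at the $k-1$ leaves other than $\mathbf{1}$; E2 at the $n$ edges), while a tree with $k-1$ leaves admits $(n-k+1)+(n-k+1)+n$ admissible moves producing $k$ leaves (N1, N2 at the $n-k+1$ interior nodes other than $\mathbf{2}$; E1 at the $n$ edges). Hence
\[
	a(n,k)=(n+2k-2)\,a(n-1,k)+(3n-2k+2)\,a(n-1,k-1).
\]

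Finally I would verify that $n!\binom{n}{k-1}^2$ solves this recurrence. The base case is immediate, and substituting and dividing by $(n-1)!$ reduces the inductive step to the elementary binomial identity
\[
	n\binom{n}{k}^2=(n+2k)\binom{n-1}{k}^2+(3n-2k)\binom{n-1}{k-1}^2,
\]
which follows from $\binom{n}{k}=\binom{n-1}{k}+\binom{n-1}{k-1}$ and $(n-k)\binom{n-1}{k-1}=k\binom{n-1}{k}$. Therefore $a(n,k)=n!\binom{n}{k-1}^2$, and
\[
	\widetilde{N}^B_n(x,y,t,t)=t^{n+1}\sum_{k}a(n,k)\,x^{k-1}y^{n-k+1}=n!\,t^{n+1}\sum_{j}\binom{n}{j}^2x^jy^{n-j}=n!\,t^{n+1}N_n^B(x,y).
\]

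The main obstacle I anticipate is the case analysis in the second step: checking that the insertions that leave $\mathcal{T}^*$ are \emph{exactly} N1 and N2 at $\mathbf{1}$ and $\mathbf{2}$, and that the deletion algorithm stays in $\mathcal{T}^*$. The subtle point is node $\mathbf{2}$, which carries $\mathbf{1}$ as its leftmost leaf: when an E2 is applied at $\mathbf{2}$ itself, or at an edge whose operation relocates $\mathbf{2}$ as one of the moved siblings, one must confirm that the subtree at $\mathbf{2}$ — and hence $\mathbf{1}$'s being its leftmost leaf — is transported intact. The remaining bookkeeping (the leaf counts and the binomial identity) is routine.
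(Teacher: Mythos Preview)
Your proposal is correct but takes a genuinely different route from the paper. The paper does not set up a recurrence at all; instead it counts $a(n,k)$ directly by an unlabeling argument: a tree in $\mathcal{T}^*_n$ is an unlabeled plane tree on $n+2$ nodes together with a choice of one of its old leaves (to be $\mathbf{1}$; its parent is then $\mathbf{2}$) and a free labeling of the remaining $n$ nodes. This yields $a(n,k)=n!\sum_i i\,r_{n+1,k,i}$, where $r_{n,k,i}$ is the Chen--Deutsch--Elizalde count of unlabeled plane trees with $n+1$ nodes, $k$ leaves, and $i$ old leaves (quoted as Lemma~\ref{lem-uud}); the inner sum is then collapsed to $\binom{n}{k-1}^2$ by a short chain of binomial identities ending with Vandermonde. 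Your argument, by contrast, stays entirely inside the paper's own machinery: you restrict the insertion algorithm to $\mathcal{T}^*$, read off the recurrence $a(n,k)=(n+2k-2)a(n-1,k)+(3n-2k+2)a(n-1,k-1)$, and verify the closed form by induction. What you gain is self-containment (no external lemma) and a pleasing parallel with the type~A proof of~\eqref{rec-A-number}; what the paper's approach buys is a non-inductive derivation. Your worry about the E2 case is unfounded: in every E2 the subtrees at $\mathbf{j}$ and its elder siblings are carried intact, so the edge $(\mathbf{2},\mathbf{1})$ survives whenever $\mathbf{2}$ is relocated; indeed the paper itself, in its later proof of $D^n(t)=\widetilde{N}^B_n(x,y,s,t)$, asserts exactly your claim that the forbidden insertions are precisely N1 and N2 at $\mathbf{1}$ and at $\mathbf{2}$.
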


To prove \eqref{NBSimple}, we need the following conclusion given by Chen, Deutsch, and  Elizalde~\cite{Chen2006Old}.
\begin{lem}[\cite{Chen2006Old}]\label{lem-uud}
	The number of unlabeled plane trees with $n+1$ nodes, $k$ leaves, and $i$ old leaves is
	\[
		r_{n,k,i}=\frac{1}{n}{n\choose i}{n-i\choose k-i}{n-k\choose i-1 }.
	\]
\end{lem}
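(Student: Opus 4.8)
The plan is to prove the formula for $r_{n,k,i}$ by a generating-function computation based on the recursive structure of plane trees, followed by Lagrange inversion. Assume $n\geq 1$, so that the root is an interior node. I would introduce a weight that records $z$ for each edge, $a$ for each old leaf, and $b$ for each young leaf; then $r_{n,k,i}$ is exactly the coefficient of $z^n a^i b^{k-i}$ in the generating function $R=R(z,a,b)$ of all plane trees, since a tree with $n+1$ nodes has $n$ edges, and its $k$ leaves split into $i$ old leaves and $k-i$ young leaves. (Note that for $n\geq 1$ the leftmost leaf reached from the root is always old, so $i\geq 1$, consistent with the factor $\binom{n-k}{i-1}$.)

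First I would decompose a plane tree by the children of its root: reading left to right, the first child is old and all later children are young. Writing $\Phi$ and $\Psi$ for the generating functions of subtrees whose root is old, respectively young -- each including the edge joining that root to its parent -- the crucial observation is that $\Phi$ and $\Psi$ differ only in the weight assigned when the subtree root happens to be a leaf (namely $a$ versus $b$); the recursive structure below the root is identical. This yields the system $\Phi = z\bigl(a+\Phi/(1-\Psi)\bigr)$ and $\Psi = z\bigl(b+\Phi/(1-\Psi)\bigr)$, together with $R=\Phi/(1-\Psi)$ for the full tree (root interior, no edge above it, no leaf weight). Substituting $\Phi=z(a+R)$ and $\Psi=z(b+R)$ collapses the system to the single algebraic equation $R=z\,\phi(R)$ with $\phi(w)=w^2+(1+b)w+a$.

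Then I would apply Lagrange inversion to obtain $[z^n]R=\frac1n[w^{n-1}]\phi(w)^n$ and expand $\phi(w)^n=(a+(1+b)w+w^2)^n$ by the multinomial theorem. A term $a^p\bigl((1+b)w\bigr)^q (w^2)^r$ with $p+q+r=n$ contributes to $w^{n-1}$ precisely when $q+2r=n-1$, which together with $p+q+r=n$ forces $p=r+1$; matching $a^p=a^i$ then gives $r=i-1$, $p=i$, $q=n+1-2i$, and extracting $b^{k-i}$ from $(1+b)^q$ supplies a factor $\binom{n+1-2i}{k-i}$. Collecting the multinomial coefficient $\frac{n!}{i!\,(n+1-2i)!\,(i-1)!}$ and simplifying the factorials recovers $\frac1n\binom{n}{i}\binom{n-i}{k-i}\binom{n-k}{i-1}$, as claimed. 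I expect the main obstacle to be setting up the decomposition correctly -- in particular, recognizing that the old- and young-subtree series differ only in the leaf weight, which is exactly what reduces the problem to a one-variable functional equation of Lagrange type; once that reduction is in hand, the binomial bookkeeping is routine, and small cases such as $n=1,2$ can be used to sanity-check the edge and leaf accounting.
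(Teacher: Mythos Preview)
The paper does not supply its own proof of this lemma; it is quoted from Chen, Deutsch, and Elizalde~\cite{Chen2006Old} and used as a black box in the proof of Theorem~\ref{thm-NB}. So there is nothing in the paper to compare against.

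Your generating-function argument is correct and self-contained. The decomposition into old- and young-rooted subtrees is set up properly: the key insight that $\Phi$ and $\Psi$ differ only in the leaf weight ($a$ versus $b$) is exactly right, and substituting $\Phi=z(a+R)$, $\Psi=z(b+R)$ into $R=\Phi/(1-\Psi)$ does collapse to $R=z\bigl(a+(1+b)R+R^2\bigr)$. The Lagrange step and multinomial bookkeeping are routine and check out; in particular, the product
\[
\binom{n}{i}\binom{n-i}{k-i}\binom{n-k}{i-1}=\frac{n!}{i!\,(k-i)!\,(i-1)!\,(n-k-i+1)!}
\]
matches the coefficient $\dfrac{n!}{i!\,(n+1-2i)!\,(i-1)!}\dbinom{n+1-2i}{k-i}$ your computation produces. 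The original source~\cite{Chen2006Old} obtains the formula bijectively (via correspondences with $2$-Motzkin paths), so your analytic route is a genuinely different proof; its advantage is that it is short and mechanical once the functional equation is in hand, while the bijective approach explains the product form combinatorially.
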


\begin{proof}[Proof of Theorem~\ref{thm-NB}]

	It can be seen that
	\begin{align*}
		\widetilde{N}^B_n(x,y,t,t) & =t^{n+1} \sum_{T\in \mathcal{T}^*_n}x^{\ell(T)-1}y^{i(T)-1}                    \\
		                           & =n!\, t^{n+1} \sum_{T\in \mathcal{U}_{n+2}}ol(T)\cdot x^{\ell(T)-1}y^{i(T)-1},
	\end{align*}
	where $\mathcal{U}_{n}$ denotes the set of unlabeled rooted tress with $n$ nodes, and  $ol(T)$ denotes the number of old leaves of $T$.
	The last equation holds since a tree in $\mathcal{T}^*_n$ can be obtained from an unlabeled tree $T$ with $n+2$ nodes by labeling freely $\{3,4,\ldots,n+2\}$ on all nodes of $T$ except for an old leaf and its father.

	\allowdisplaybreaks
	It follows from Lemma \ref{lem-uud} that
	\begin{align*}
		\sum_{T\in \mathcal{U}_{n+2}}ol(T) x^{\ell(T)-1}  y^{i(T)-1} & =\sum_{k=1}^{n+1} \sum_{i=1}^{k} i \, r_{n+1,k,i}x^{k-1}y^{n-k+1}                                                  \\
		                                                             & =\sum_{k=1}^{n+1} \sum_{i=1}^{k}  \frac{i}{n+1}{n+1\choose i}{n+1-i\choose k-i}{n+1-k\choose i-1 }x^{k-1}y^{n-k+1} \\
		                                                             & =\sum_{k=1}^{n+1} \sum_{i=1}^{k}  {n\choose i-1}{n+1-i\choose k-i}{n+1-k\choose i-1 }x^{k-1}y^{n-k+1}              \\
		                                                             & =\sum_{k=1}^{n+1} \sum_{i=1}^{k}
		{n\choose k-1}{k-1\choose k-i}{n+1-k\choose i-1 }x^{k-1} y^{n-k+1}                                                                                                                \\
		                                                             & =\sum_{k=1}^{n+1}{n\choose k-1}\sum_{i=1}^k {k-1\choose k-i}{n+1-k\choose i-1 } x^{k-1}y^{n-k+1}                   \\
		                                                             & =\sum_{k=0}^{n}{n\choose k}^2x^{k}y^{n-k}.
	\end{align*}
	The second last equation holds from the famous Vandermonde's identity.
\end{proof}
\begin{thm}
	Let $D$ denote the formal derivative associated with grammar \eqref{grammar_simple}. For any integer $n\geq0$, we have     \begin{equation}\label{equ:grammar_B}
		D^n(t)=\widetilde{N}^B_n(x,y,s,t).
	\end{equation}
\end{thm}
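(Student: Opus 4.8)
The plan is to mimic the proof of \eqref{eq:Ma_2}, replacing the initial label $y$ by the initial label $t$ and replacing the family $\mathcal{T}_{n+1}$ by the family $\mathcal{T}^*_n$. The key point is that $\mathcal{T}^*_n$ consists of labeled plane trees on $[n+2]$ in which $\mathbf{1}$ is the old child of $\mathbf{2}$, and the single element of $\mathcal{T}^*_0$ is the tree with two nodes $\mathbf{2}\to\mathbf{1}$, whose only edge $(\mathbf{2},\mathbf{1})$ is easily checked to be proper (since $\alpha(\mathbf{1})=2>\beta(\mathbf{1})=1$ would make it improper --- wait, one must check the convention carefully: with $\alpha(\mathbf{1})=2$ and $\beta(\mathbf{1})=1$ we have $\alpha(\mathbf{1})\not<\beta(\mathbf{1})$, so this edge is improper). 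Thus the base case reads $D^0(t)=t=\widetilde N^B_0(x,y,s,t)$ once one confirms that the definition of $\widetilde N^B_n$ assigns weight $t^{1}x^{0}y^{0}=t$ to this tree, i.e. $\ell(T)=1$, $i(T)=1$, $k=1$, $r=1$, $n=0$ gives $s^{0}t^{1}x^{0}y^{0}$. I would state this base case explicitly and verify it against the formula $\widetilde N^B_n(x,y,s,t)=\sum_{k,r}\widetilde N_B(n,k,r)s^{n+1-r}t^rx^{k-1}y^{n-k+1}$.

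For the inductive step, I would reuse verbatim the grammatical labeling from the proof of \eqref{eq:Ma_2}: label each leaf by $x$, each interior node by $y$, each proper edge by $t$, each improper edge by $s$, and set $wt(T)$ to be the product of all labels. The crucial observation is that the insertion algorithm of Section~\ref{section-2}, restricted to trees in $\mathcal{T}^*_n$, produces exactly the trees in $\mathcal{T}^*_{n+1}$: indeed the distinguished pair $(\mathbf{1},\mathbf{2})$ is untouched by any of N1, N2, E1, E2 provided we never relabel $\mathbf{1}$ or $\mathbf{2}$, which is automatic since the inserted node always receives the largest label $\mathbf{n+3}$ and the only relabelings in N2/E2 promote an existing label to $\mathbf{n+3}$. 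Conversely, the deletion algorithm applied to a tree in $\mathcal{T}^*_{n+1}$ (whose distinguished node $\mathbf{n+3}$ is necessarily distinct from $\mathbf{1},\mathbf{2}$) removes or contracts at $\mathbf{n+3}$ and leaves a tree still in $\mathcal{T}^*_n$. Then the same case analysis showing that $D$ acts on $wt(T)$ exactly as the four insertion moves act on $T$ --- the moves on a leaf/interior node realize $x\to(s+t)xy$ and $y\to(s+t)xy$, the moves on a proper/improper edge realize $t\to t(sx+ty)$ and $s\to s(sx+ty)$ --- carries over word for word, because that analysis was purely local and never used the root label. Hence
\[
D^n(t)=D\!\left(\sum_{T\in\mathcal{T}^*_{n-1}}wt(T)\right)=\sum_{T^*\in\mathcal{T}^*_{n}}wt(T^*)=\widetilde N^B_n(x,y,s,t),
\]
where the last equality is just the definition of $\widetilde N^B_n$ together with the identity $wt(T)=s^{n+1-r}t^{r}x^{k}y^{n-k+2}$ divided appropriately; one must be slightly careful that the exponents in the definition of $\widetilde N^B_n$ are $x^{k-1}y^{n-k+1}$ rather than $x^{k}y^{n-k+2}$, so I would note that a tree in $\mathcal{T}^*_n$ has $n+2$ nodes, hence $\ell(T)+i(T)=n+2$, and contains the forced leaf $\mathbf{1}$ and forced interior node $\mathbf{2}$, so writing $k=\ell(T)$ gives $wt(T)=s^{\text{prop}}t^{\text{imp}}x^{k}y^{n-k+2}$ and the bookkeeping matches after recording that both a leaf and an interior node are ``used up'' by the distinguished pair.

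The main obstacle I anticipate is not the grammar-to-tree correspondence, which is inherited, but the verification that the insertion and deletion algorithms genuinely preserve the defining property of $\mathcal{T}^*_n$ (that $\mathbf{1}$ is the old child of $\mathbf{2}$), especially under moves E2 and N2 which reshuffle children and relabel a node. I would handle this by checking each of the four moves individually: N1 and E1 add a leaf with the fresh largest label and change nothing structural about $\mathbf{1},\mathbf{2}$; N2 relabels a \emph{chosen} node $\mathbf{i}$ --- but if we only ever choose $\mathbf{i}\notin\{\mathbf{1},\mathbf{2}\}$ then $(\mathbf{1},\mathbf{2})$ survives, and choosing $\mathbf{i}\in\{\mathbf{1},\mathbf{2}\}$ would break the property, so the restricted insertion algorithm simply forbids those choices (which is consistent because on the deletion side $\mathbf{n+3}\ne\mathbf{1},\mathbf{2}$ and contracting at $\mathbf{n+3}$ never creates the label $\mathbf{1}$ or $\mathbf{2}$ in a new position); and E2 likewise relabels the parent $\mathbf{i}$ of the chosen edge, so we forbid edges incident to $\mathbf{1}$ or $\mathbf{2}$ as parent --- again consistent with deletion. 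I would also double-check that the promotion of an improper edge to a proper one (or vice versa) caused by these moves does not disturb the $\beta$-value of $\mathbf{1}$, which is immediate since $\beta(\mathbf{1})=1$ always. Once these local checks are in place, the theorem follows by the identical induction used for \eqref{eq:Ma_2}.
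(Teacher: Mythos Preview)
Your overall strategy matches the paper's: reuse the grammatical labeling and the insertion/deletion correspondence, now starting from the two-node tree and restricting to $\mathcal{T}^*_n$. However, there is a genuine gap in the way you set up the weight. You propose to label \emph{every} node, including $\mathbf{1}$ (by $x$) and $\mathbf{2}$ (by $y$), and then to ``forbid'' the insertions at $\mathbf{1}$ and $\mathbf{2}$ by fiat. But the formal derivative $D$ does not know about forbidden positions: it differentiates every occurrence of $x,y,s,t$ in $wt(T)$. If $wt(T)$ carries a factor $x$ for the node $\mathbf{1}$ and a factor $y$ for the node $\mathbf{2}$, then $D(wt(T))$ will contain the terms coming from those two factors, and those terms correspond exactly to the N1/N2 insertions at $\mathbf{1}$ and at $\mathbf{2}$, each of which destroys the defining property of $\mathcal{T}^*$ (either $\mathbf{1}$ ceases to be a leaf, or it ceases to be the old child of $\mathbf{2}$). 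So with your weight the identity $D\bigl(\sum_{T\in\mathcal{T}^*_{n-1}}wt(T)\bigr)=\sum_{T\in\mathcal{T}^*_{n}}wt(T)$ simply fails.

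The fix, which is precisely what the paper does, is to \emph{omit} the node labels on $\mathbf{1}$ and $\mathbf{2}$: define $wt(T)$ for $T\in\mathcal{T}^*_n$ as the product of edge labels and of node labels over all nodes \emph{except} $\mathbf{1}$ and $\mathbf{2}$. This single adjustment resolves both of the issues you flagged. First, the exponent bookkeeping is no longer off by one in each of $x$ and $y$: a tree with $k$ leaves and $r$ improper edges now has weight $s^{n+1-r}t^{r}x^{k-1}y^{n-k+1}$, matching the definition of $\widetilde N^B_n$ on the nose (and the base case is literally $wt=t$, since only the improper edge $(\mathbf{2},\mathbf{1})$ is labeled). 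Second, because $\mathbf{1}$ and $\mathbf{2}$ carry no letter, $D$ has nothing to differentiate there, so the action of $D$ on $wt(T)$ corresponds exactly to insertions at the remaining nodes and at all edges; one then checks, as the paper notes in two lines, that these insertions preserve ``$\mathbf{1}$ is the old leaf of $\mathbf{2}$''. Your checks that edge moves at $(\mathbf{2},\mathbf{1})$ are harmless, and that the deletion algorithm stays inside $\mathcal{T}^*$, are correct and useful, but they only become a proof once the weight is defined so that the grammar does not touch $\mathbf{1}$ and $\mathbf{2}$.
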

\pf  For a tree $T$ in $\mathcal{T}^*_n$, we define the weight of $T$ as a normal labeled rooted tree, except for the labels on the node $\mathbf{1}$ and $\mathbf{2}$. Then a tree $T$ in $\mathcal{T}^*_n$ with $k$ leaves and $r$ improper edges has the weight $wt(T)=s^{n+1-r}t^r x^{k-1}y^{n-k+1}$, which implies \eqref{equ:grammar_B} is equivalent to
\begin{equation}\label{grammar_B}
	D^n(t)=\sum_{T\in\mathcal{T}^*_n}wt(T).
\end{equation}

For $n=0$, the only tree in  $\mathcal{T}^*_0$ is the tree on $[2]$ where $\mathbf{2}$ is root, whose weight is $t$. Now consider a tree in $\mathcal{T}^*_{n-1}$. Notice that there is no label in $\mathbf{1}$ and $\mathbf{2}$.
Thus we forbid the insertion of $\mathbf{n+2}$ on $\mathbf{1}$ and $\mathbf{2}$.
Since we do not insert $\mathbf{n+2}$ on $\mathbf{1}$, the node $\mathbf{1}$ is always a leaf.
Since we do not insert $\mathbf{n+2}$ on $\mathbf{2}$, the node $\mathbf{1}$ is always the first child of $\mathbf{2}$.  This completes the proof.
\qed

Setting $s=t$ in grammar \eqref{grammar_simple}, we get a grammatical description of $N_n^A(x,y)$ and $N_n^B(x,y)$.

\begin{thm}\label{Thm_TypeA}
	Let $D_H$ denote the formal derivative associated with the grammar
	\begin{equation}\label{grammar_simple-2}
		H:= \{t\rightarrow t^2(x+y),
		x\rightarrow 2txy,
		y\rightarrow 2txy\}.
	\end{equation}
	For $n\geq0$, it holds that
	\begin{align}
		D_H^n(y) & =(n+1)!t^{n}N_n^A(x,y),\label{equ:grammar-H_A} \\
		D_H^n(t) & =n!t^{n+1}N_n^B(x,y). \label{equ:grammar-H_B}
	\end{align}
	Especially,
	\begin{align}
		D_H^n(y)|_{y=1} & =(n+1)!t^nN_n^A(x),\label{equ:grammar_simple_A} \\
		D_H^n(t)|_{y=1} & =n!t^{n+1}N_n^B(x).\label{equ:grammar_simple_B}
	\end{align}
\end{thm}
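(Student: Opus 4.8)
The plan is to deduce the statement from the two identities already proved for the grammar $G$ in \eqref{grammar_simple}, by recognizing $H$ as the specialization of $G$ obtained by identifying the letter $s$ with the letter $t$.

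First I would introduce the $\mathbb{R}$-algebra homomorphism $\phi\colon\mathbb{R}[s,t,x,y]\to\mathbb{R}[t,x,y]$ determined by $\phi(s)=\phi(t)=t$, $\phi(x)=x$, $\phi(y)=y$, and verify that it intertwines the two formal derivatives, i.e. $\phi\circ D=D_H\circ\phi$. Since $D$ and $D_H$ are both derivations and $\phi$ is multiplicative, it suffices to check the four generators: $\phi(D(s))=\phi\bigl(s(sx+ty)\bigr)=t^2(x+y)=D_H(t)=D_H(\phi(s))$, likewise $\phi(D(t))=t^2(x+y)=D_H(\phi(t))$, and $\phi(D(x))=\phi\bigl((s+t)xy\bigr)=2txy=D_H(x)=D_H(\phi(x))$, and similarly for $y$. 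A one-line induction on $n$ then yields $\phi\bigl(D^n(w)\bigr)=D_H^n\bigl(\phi(w)\bigr)$ for all $n\geq0$ and all $w\in\mathbb{R}[s,t,x,y]$.

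Next I would specialize. Applying the commutation to $w=y$ and using \eqref{eq:Ma_2} gives $D_H^n(y)=\phi\bigl(\widetilde{N}_n^A(x,y,s,t)\bigr)=\widetilde{N}_n^A(x,y,t,t)$. Since every monomial of $\widetilde{N}_n^A(x,y,s,t)$ carries the factor $s^{n-r}t^{r}$, this polynomial is homogeneous of degree $n$ in the pair $(s,t)$, so $\widetilde{N}_n^A(x,y,t,t)=t^n\widetilde{N}_n^A(x,y,1,1)=(n+1)!\,t^nN_n^A(x,y)$ by \eqref{eq-A}, which is \eqref{equ:grammar-H_A}. Applying the same commutation to $w=t$ and using \eqref{equ:grammar_B} gives $D_H^n(t)=\widetilde{N}^B_n(x,y,t,t)$, and \eqref{NBSimple} identifies the right side with $n!\,t^{n+1}N_n^B(x,y)$, which is \eqref{equ:grammar-H_B}. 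Finally, setting $y=1$ and noting that $N_n^A(x,1)=N_n^A(x)$ and $N_n^B(x,1)=N_n^B(x)$ directly from the definitions of the homogenized Narayana polynomials yields \eqref{equ:grammar_simple_A} and \eqref{equ:grammar_simple_B}.

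The hard part will be the compatibility check $\phi\circ D=D_H\circ\phi$: one must confirm that substituting $t$ for $s$ commutes with every production rule of $G$, so that the specialization may be carried out either before or after differentiation. Everything after that is routine — the homogeneity remark for type A and two direct evaluations at $y=1$ — so I expect the proof to be short once this compatibility lemma is recorded.
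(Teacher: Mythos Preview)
Your proposal is correct and follows essentially the same route as the paper: the paper obtains the theorem by ``setting $s=t$ in grammar \eqref{grammar_simple}'' and then invoking \eqref{eq:Ma_2}, \eqref{eq-A}, \eqref{equ:grammar_B}, and \eqref{NBSimple}, which is exactly your specialization argument. Your explicit verification that the substitution $\phi$ intertwines $D$ and $D_H$ makes rigorous what the paper leaves implicit, but the underlying idea is identical.
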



This implies Equation \eqref{equ:grammar-H_B}. \qed

\begin{rem}
	Ma, Ma, and Yeh~\cite{Ma2019gamma} provided a context-free grammar
	\begin{equation}\label{g:Ma}
		\{u\rightarrow u^2v^3,v\rightarrow u^3v^2\},
	\end{equation}
	and showed that
	\begin{equation*}
		D^n(u^2)=(n+1)!\sum_{k=0}^nN(n,k)u^{3n-2k+2}v^{n+2k}
	\end{equation*}
	and
	\begin{equation*}
		D^n(uv)=n!\sum_{k=0}^n\binom{n}{k}^2 u^{3n-2k+1} v^{n+2k+1}.
	\end{equation*}
	By setting $t=uv, x=u^2, y=v^2$, the grammar \eqref{grammar_simple-2} can be changed into the grammar \eqref{g:Ma}.


\end{rem}

As an application of the grammar $H$ given in \eqref{grammar_simple-2}, we next show a grammatical proof of the following convolution identities of $N_n^A(x,y)$ and $N_n^B(x,y)$.
Note that our definition of $N_n^A(x)$ here is different from that in Petersen~\cite{Petersen2015Eulerian} due to a shift of $x$.

\begin{thm}
	For $n\geq2$, we have
	\begin{align}
		N_n^A(x,y) & = (x+y) N_{n-1}^A(x,y)+\sum_{k=2}^{n-1}N_{k-1}^A(x,y)N_{n-k}^A(x,y) \,\, (\cite[Theorem 2.2]{Petersen2015Eulerian}), \label{Conv_Nara_A} \\
		N_n^B(x,y) & = (x+y) N_{n-1}^B(x,y)+2 \sum_{k=0}^{n-2}N_{k}^B(x,y)N_{n-k-1}^A(x,y). \label{Conv_Nara_B}
	\end{align}
\end{thm}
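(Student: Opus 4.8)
The plan is to give a grammatical (generating-function) proof using the grammar $H$ from \eqref{grammar_simple-2}, exactly as the paragraph preceding the statement advertises. By Theorem~\ref{Thm_TypeA} we know $D_H^n(y) = (n+1)!\,t^n N_n^A(x,y)$ and $D_H^n(t) = n!\,t^{n+1} N_n^B(x,y)$, so the strategy is to expand the exponential generating functions $Y(z) := \sum_{n\ge 0} \frac{z^n}{n!} D_H^n(y)$ and $W(z) := \sum_{n \ge 0} \frac{z^n}{n!} D_H^n(t)$ (these are the formal "$e^{zD_H}$" applied to $y$ and to $t$) and find closed-form functional equations for them. Concretely, I expect $Y(z)$ to satisfy a quadratic of the shape $A(z)\,Y^2 + B(z)\,Y + C(z) = 0$ with coefficients that are simple rational functions in $z, x, y, t$, and similarly a relation expressing $W$ in terms of $Y$. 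Extracting the coefficient of $z^n$ from such a functional equation produces precisely a convolution identity of the form in \eqref{Conv_Nara_A}–\eqref{Conv_Nara_B}, with the shifts in the summation index coming from the degree bookkeeping of the $t^n$ and $t^{n+1}$ prefactors.

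The key steps, in order, are as follows. First, I would set up the generating functions and use the defining property of the formal derivative associated with a grammar: $\frac{d}{dz} (e^{zD_H} f) = e^{zD_H}(D_H f)$, so that each generating function satisfies an ODE obtained by applying $H$ to its seed. Applying $H$ to $y$ gives $D_H(y) = 2txy$, and to compute higher powers I would track how $H$ acts on the monomials $x$, $y$, $t$ simultaneously; this is cleanest if I introduce auxiliary generating functions for $e^{zD_H}(x)$, $e^{zD_H}(t)$ as well and observe the system closes. Second, I would solve the resulting system — I expect it reduces to a single algebraic (quadratic) equation for $Y(z)$ after eliminating the others, using that $H$ preserves the "homogeneous" structure (note $x \to 2txy$ and $y \to 2txy$ are symmetric in $x,y$, and $t \to t^2(x+y)$). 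Third, I would read off \eqref{Conv_Nara_A} by writing $Y = (x+y)z + \cdots$ as the lowest terms, substituting into the quadratic, and matching coefficients of $z^n$: the quadratic term $Y^2$ contributes $\sum_k (\text{coeff in }z^{k})(\text{coeff in }z^{n-k})$, which after reindexing is exactly $\sum_{k=2}^{n-1} N_{k-1}^A N_{n-k}^A$, and the linear term contributes the $(x+y)N_{n-1}^A$ piece. Fourth, for \eqref{Conv_Nara_B} I would derive the analogous relation for $W(z)$, which I expect to be \emph{linear} in $W$ but to involve $Y$ multiplicatively (something like $W' = (\text{stuff})\cdot W + 2(\text{stuff})\cdot W \cdot Y$ or an integral version $W = t + 2\int W \cdot (\text{expression in } Y)$), again reflecting the mixed $N^B \cdot N^A$ convolution with the factor $2$; extracting coefficients finishes it.

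The main obstacle I anticipate is pinning down the exact functional equations — in particular getting the coefficients $A(z), B(z), C(z)$ of the quadratic right, since $H$ mixes $x$, $y$, and $t$ and the prefactors $t^n$, $t^{n+1}$ in Theorem~\ref{Thm_TypeA} mean one must be careful about what role $t$ plays (it should be treatable as a scalar that only shifts degrees, but the rule $t \to t^2(x+y)$ makes $t$ genuinely dynamical under $D_H$, so one cannot naively set $t=1$). One clean way around this is to use the homogeneity: by Theorem~\ref{Thm_TypeA} the answers are quasi-homogeneous, so I would rescale $z \mapsto z/t$ or work with $tz$ as the expansion variable, reducing to the two-variable polynomials $N_n^A(x,y)$, $N_n^B(x,y)$ whose generating functions are the classical ones for the Narayana numbers; then \eqref{Conv_Nara_A} is essentially the well-known quadratic functional equation for the Narayana generating function (a weighted Catalan-type equation), and \eqref{Conv_Nara_B} follows from the type-$B$ generating function being a ratio involving the type-$A$ one. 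A secondary, purely bookkeeping nuisance will be the boundary terms: the hypothesis $n \ge 2$, the starting index $k = 2$ in \eqref{Conv_Nara_A} and $k = 0$ in \eqref{Conv_Nara_B}, and the conventions $N_0^A(x,y) = y$, $N_0^B(x,y) = 1$ (implicitly) must be reconciled with the lowest-order terms of the generating functions, which I would check by hand for $n = 2, 3$ to fix all constants.
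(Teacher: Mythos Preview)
Your plan is sound and would lead to a correct proof, but it takes a different route from the paper. The paper does \emph{not} set up and solve a functional equation for $\Gen(y,z)$ or $\Gen(t,z)$ directly; instead it exploits a slick vanishing trick with \emph{Laurent} polynomials. The key observation is that
\[
D_H(t^{-2})=-2t^{-1}(x+y),\qquad D_H^2(t^{-2})=2(y-x)^2,\qquad D_H^n(t^{-2})=0\ \text{for }n\ge 3,
\]
together with $D_H(t^{-1})=-(x+y)$ and $D_H^n(t^{-1})=-2\,n!\,t^{n-1}N_{n-1}^A(x,y)$ for $n\ge 2$. Applying the Leibniz rule to the factorization $t^{-2}=t^{-1}\cdot t^{-1}$ then makes the left side vanish for $n\ge 3$, and the right side is exactly the quadratic convolution in~\eqref{Conv_Nara_A}. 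For~\eqref{Conv_Nara_B} the paper applies Leibniz to $1=t\cdot t^{-1}$, using $D_H^n(1)=0$, so that $D_H^k(t)$ (which carries $N_k^B$) is convolved with $D_H^{n-k}(t^{-1})$ (which carries $N_{n-k-1}^A$), and the factor $2$ falls out of the normalization of $D_H^n(t^{-1})$.

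What each approach buys: your generating-function method is the standard one and, as you note, essentially amounts to first proving the closed forms for $C^A$ and $C^B$ (the paper's \emph{next} theorem) and then reading the convolutions off the quadratic and the product relation; it works but requires solving the coupled system for $e^{zD_H}$ on $x,y,t$ and handling the dynamical $t$ you flagged. The paper's approach sidesteps all of that by passing to $t^{-1}$ and $t^{-2}$, where the iterated derivatives terminate, so no functional equation ever needs to be solved and the boundary bookkeeping (your ``secondary nuisance'') becomes a two-line check. If you want to align with the paper, the single idea you are missing is to look at $D_H^n(t^{-1})$ rather than $D_H^n(y)$ as the carrier of $N^A$, and to use the vanishing of $D_H^n$ on $t^{-2}$ and on $1$ as the source of the identities.
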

\pf Consider the following Leibnitz-type convolution
\begin{equation}\label{Conv_Nara_grammar}
	D_H^n(t^{-2})=\sum_{k=0}^n {n\choose k}D_H^k(t^{-1})D_H^{n-k}(t^{-1}).
\end{equation}
One can verify that
\[
	D_H(t^{-2})=-2t^{-1}(x+y),\quad D_H^2(t^{-2})=2(y-x)^2,\quad D_H^n(t^{-2})=0,\mbox{ for } n\geq3,
\]
and
\[
	D_H(t^{-1})=-(x+y),\quad D_H^n(t^{-1})=-2n!t^{n-1}N_{n-1}^A(x,y),\mbox{ for } n\geq2.
\]
Thus for $n\geq 3$, the left side  of \eqref{Conv_Nara_grammar}  vanishes, while the right  side of \eqref{Conv_Nara_grammar} can be calculated as follows,
\begin{equation*}
	R.H.S.=2t^{-1}D_H^n(t^{-1})+2nD_H(t^{-1})D_H^{n-1}(t^{-1})+\sum_{k=2}^{n-2} {n\choose k}D_H^k(t^{-1})D_H^{n-k}(t^{-1}),
\end{equation*}
which can be reduced to
\[
	-4n!t^{n-2}N_{n-1}^A(x,y)+4n!(x+y)t^{n-2}N_{n-2}^A(x,y)+4n!t^{n-2}\sum_{k=2}^{n-2} N_{k-1}^A(x,y)N_{n-k-1}^A(x,y).
\]
Now  we get
\[
	N_{n-1}^A(x,y)=(x+y)N_{n-2}^A(x,y)+\sum_{k=2}^{n-2} N_{k-1}^A(x,y)N_{n-k-1}^A(x,y),
\]
which is equivalent to \eqref{Conv_Nara_A}.

Similarly, \eqref{Conv_Nara_B} can be obtained in a similar calculation from the following Leibnitz-type convolution
\begin{equation*}
	0= D_H^n(1)=\sum_{k=0}^n {n\choose k}D_H^k(t)D_H^{n-k}(t^{-1}).
\end{equation*}
This completes the proof.
\qed

Let $C^A(x,y,z)$ and $C^B(x,y,z)$  denote the generating function of Narayana polynomials of type A and type B, respectively. Namely,
\begin{align}
	C^A(x,y,z) & =\sum_{n\geq 0}N_n^A(x,y)z^n,               \\
	C^B(x,y,z) & =\sum_{n\geq 0}N_n^B(x,y)z^n.\label{Def:CB}
\end{align}
Let $\Gen(f, t)$ be the generating function of a Laurent polynomial $f$ associated with the operator $D$, as defined by
$$
	\Gen(f, t)=\sum_{n=0}^{\infty} D^n(f) \frac{t^n}{n !} .
$$
The following relations of $\Gen(f, t)$ are fisrt given by Chen \cite{Chen1993Context}:
\begin{align*}
	\Gen(f+g, t)                         & =\Gen(f, t)+\Gen(g, t),      \\
	\Gen(f\cdot g, t)                    & =\Gen(f, t)\cdot \Gen(g, t), \\
	\frac{\mbox{d}}{\mbox{dt}}\Gen(f, t) & =\Gen(D(f), t).
\end{align*}

Based on the relation \eqref{Conv_Nara_A}, Petersen \cite{Petersen2015Eulerian} provided a formula for $C^A(x,y,z)$. Here we give a grammatical proof in the following equivalent form. Our approach also leads to a formula for  $C^B(x,y,z)$.

\begin{thm}
	We have
	\begin{align}
		C^A(x,y,z) & =\frac{1+(y-x)z-\sqrt{1-2(x+y)z+(y-x)^2z^2}}{2z}\,  (\cite[Equation\,  (2.6)]{Petersen2015Eulerian}),  \label{Cwzt} \\
		C^B(x,y,z) & =\frac{1}{\sqrt{1-2(x+y)z+(y-x)^2z^2}}.\label{CBwzt}
	\end{align}
\end{thm}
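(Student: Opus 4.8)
The plan is to use the operator $\Gen$ together with the grammar $H$ from \eqref{grammar_simple-2}. By Theorem~\ref{Thm_TypeA}, we have $D_H^n(y) = (n+1)! \, t^n N_n^A(x,y)$ and $D_H^n(t) = n! \, t^{n+1} N_n^B(x,y)$. Substituting into the definition of $\Gen$ and relabeling, one finds
\begin{align*}
\Gen(y, z) &= \sum_{n\geq 0} D_H^n(y)\frac{z^n}{n!} = \sum_{n\geq 0} (n+1)\, t^n z^n N_n^A(x,y),\\
\Gen(t, z) &= \sum_{n\geq 0} D_H^n(t)\frac{z^n}{n!} = t\sum_{n\geq 0} t^n z^n N_n^B(x,y).
\end{align*}
Thus after the substitution $z \mapsto z/t$ (legitimate as a formal power series identity in $z$), we get $\Gen(y, z/t) = \sum_{n\geq 0}(n+1) z^n N_n^A(x,y) = \frac{d}{dz}\bigl(z\, C^A(x,y,z)\bigr)$ and $\Gen(t, z/t)/t = C^B(x,y,z)$. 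So it suffices to compute $\Gen(y,z)$ and $\Gen(t,z)$ in closed form for the grammar $H$.

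First I would set up a small system of first-order ODEs for the generating functions of the letters. Write $X(z) = \Gen(x,z)$, $Y(z) = \Gen(y,z)$, $T(z) = \Gen(t,z)$. By the differentiation rule $\frac{d}{dz}\Gen(f,z) = \Gen(D_H(f),z)$ and the product rule for $\Gen$, the grammar $H = \{t \to t^2(x+y),\ x\to 2txy,\ y\to 2txy\}$ yields
\begin{align*}
X' &= 2 T X Y, & Y' &= 2 T X Y, & T' &= T^2 (X+Y),
\end{align*}
with initial conditions $X(0)=x$, $Y(0)=y$, $T(0)=t$. Subtracting the first two equations gives $(X-Y)' = 0$, so $X - Y \equiv x - y$; this reduces the system to two unknowns, say $Y$ and $T$, with $X = Y + (x-y)$. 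The remaining system $Y' = 2TY(Y+x-y)$, $T' = T^2(2Y + x - y)$ can be attacked by looking for a conserved quantity: since $Y'/Y = 2T(Y+x-y)$ and one computes $T'/T^2 = 2Y + x - y$, a natural guess is that $Y/T$ or $YT$ or $Y - $ (something) is constant or satisfies a simple relation. In fact I expect $(Y/T)' $ to simplify; working it out, $T(Y/T)' = Y' - YT'/T \cdot \frac{1}{1}$, and plugging in the ODEs should collapse to something integrable. Alternatively, one notes $\frac{d}{dz}(1/T) = -(2Y+x-y)$ and $\frac{d}{dz}\log Y = 2T(Y+x-y)$, and combining with $X-Y = x-y$ one can show $XY/T^2$ or $T^2/(XY)$ has a clean derivative, giving a first integral matching $x y / t^2$ at $z=0$.

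The cleanest route, which I would actually carry out: verify directly that the claimed formulas satisfy the ODE system. Set $Q(z) = \sqrt{1 - 2(x+y)z\,t^{?}\ldots}$ — more precisely, since the substitution $z\mapsto z/t$ is in play, I would instead first prove the $s=t$ specialization of the closed forms \emph{with} the $t$-weight restored, or equivalently work with the normalized variables $\tilde x = xt$, $\tilde y = yt$. Then $T(z)$, $Y(z)$ should be rational functions of $z$ and $\sqrt{1 - 2(\tilde x+\tilde y)z + (\tilde y - \tilde x)^2 z^2}$, guessed from $C^A$ and $C^B$: namely $T(z) = t\big/\sqrt{1-2(x+y)tz+(y-x)^2 t^2 z^2}$ (matching $T(0)=t$ and $\Gen(t,z)=t\sum t^nz^n N_n^B = t\cdot C^B$ evaluated appropriately), and $zT(z)(Y(z)+\tfrac{x-y}{?})$ matching $C^A$. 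One then differentiates these candidate expressions, checks $Y'=2TXY$ and $T'=T^2(X+Y)$ hold identically, checks the initial values, and invokes uniqueness of solutions of the ODE system in $\mathbb{R}[[z]]$ (or $\mathbb{R}[x,y,t][[z]]$). Finally, reading off $C^A(x,y,z) = \frac{1}{z}\int_0^z \Gen(y,w/t)\,\text{stuff}$ — more simply, since $\Gen(y,z/t) = \frac{d}{dz}(zC^A)$, integrate the closed form for $Y$ once in $z$ and solve for $C^A$, which recovers \eqref{Cwzt}; and $C^B = \Gen(t,z/t)/t$ recovers \eqref{CBwzt} directly without integration.

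The main obstacle is purely computational: correctly tracking the $t$-weights through the substitution $z \mapsto z/t$ (equivalently, keeping $\tilde x = xt$, $\tilde y = yt$ bookkeeping straight), and then verifying that the guessed radical expressions satisfy the nonlinear ODE system — in particular getting the algebra of $\sqrt{1-2(x+y)z+(y-x)^2z^2}$ and its derivative $\frac{-(x+y)+(y-x)^2 z}{\sqrt{\cdots}}$ to cancel correctly against the quadratic nonlinearities $2TXY$ and $T^2(X+Y)$. There is no conceptual difficulty once the first integral $X - Y = x-y$ is spotted; the risk is only sign errors and the need to fix branch of the square root (one wants $\sqrt{\cdots} \to 1$ as $z\to 0$ so that $T\to t$ and $zC^A \to 0$ with the correct leading term $N_0^A = y$). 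I would organize the write-up so that the ODE derivation and the $X-Y$ conservation law come first, then state the candidate closed forms, then verify, then extract $C^A$ and $C^B$.
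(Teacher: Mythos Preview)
Your strategy is correct and would go through: the ODE system for $X,Y,T$ is right, the conservation law $X-Y\equiv x-y$ is the key first integral (and your guess $XY/T^2\equiv xy/t^2$ is also a genuine first integral, which you can verify from $(XY)'/(XY)=(T^2)'/(T^2)=2T(X+Y)$), and the verification-plus-uniqueness argument for the candidate closed forms works once you compute $Y(z)=\tfrac12\bigl[(y-x)+\tfrac{(x+y)-(y-x)^2 tz}{Q(tz)}\bigr]$ with $Q(w)=\sqrt{1-2(x+y)w+(y-x)^2w^2}$; the required identities $T'=T^2(X+Y)$ and $Y'=2TXY$ both reduce to $(x+y)^2-(x-y)^2=4xy$.

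The paper, however, takes a shorter and rather different route. Instead of solving or verifying a nonlinear ODE system, it computes $\Gen(t^{-2},u)$ directly: since $D_H(t^{-2})=-2t^{-1}(x+y)$, $D_H^2(t^{-2})=2(y-x)^2$, and $D_H^n(t^{-2})=0$ for $n\ge 3$, the series $\Gen(t^{-2},u)$ is literally the quadratic polynomial $t^{-2}-2t^{-1}(x+y)u+(y-x)^2u^2$. Multiplicativity of $\Gen$ then gives $\Gen(t,u)=1/\sqrt{\Gen(t^{-2},u)}$, which is \eqref{CBwzt} after the substitution $z=tu$. For $C^A$ the paper avoids your integration step by expanding $\Gen(-t^{-1},u)$: since $D_H(-t^{-1})=x+y$ and $D_H^n(-t^{-1})=2D_H^{n-1}(y)=2n!\,t^{n-1}N_{n-1}^A$ for $n\ge 2$, one reads off $\Gen(-t^{-1},u)=-t^{-1}+(x+y)u+2u\bigl(C^A(x,y,tu)-y\bigr)$, and equating this with $-\sqrt{\Gen(t^{-2},u)}$ yields \eqref{Cwzt} immediately. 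So the paper's trick is to work with negative powers of $t$, whose $D_H$-orbit terminates; this sidesteps both the nonlinear ODE and the extra antiderivative your approach needs for $C^A$. Your method is more systematic and would generalize to grammars without such a terminating orbit, but here the paper's shortcut is cleaner.
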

\pf Notice that
\[
	D_H(t^{-2})=-2t^{-1}(y+x),\quad D_H^2(t^{-2})=2(y-x)^2,\quad D_H^n(t^{-2})=0,\mbox{ for } n\geq3,
\]
We obtain
$$\Gen(t^{-2},u)=t^{-2}-2t^{-1}(y+x)u+(y-x)^2u^2.$$
Equivalently,
\[
	\Gen(t^{-1},u)={\sqrt{t^{-2}-2t^{-1}(y+x)u+(y-x)^2u^2}}.
\]
Thus,
\[
	\Gen(t,u)=\frac{1}{\sqrt{t^{-2}-2t^{-1}(y+x)u+(y-x)^2u^2}}.
\]
It follows from \eqref{equ:grammar-H_B} and \eqref{Def:CB} that
\begin{equation*}
	\Gen(t,u)=tC^B(x,y,tu),
\end{equation*}
which leads to Equation \eqref{CBwzt}.

According to $D_H(-t^{-1})=y+x$,  it holds that
\begin{align*}
	\Gen(-t^{-1},u) & =-t^{-1}+(x+y)u+2\sum_{n\geq2}D_H^{n-1}(x)\frac{u^n}{n!} \\
	                & =-t^{-1}+(x+y)u+2\sum_{n\geq2}N^A_{n-1}(x,y)t^{n-1}u^n   \\
	                & =-t^{-1}+(x+y)u+2u \left(C^A(x,y,tu)-y \right).
\end{align*}
This implies
\[
	C^A(x,y,tu)=\frac{1+(y-x)tu-\sqrt{1-2(x+y)tu+(y-x)^2t^2u^2}}{2tu},
\]
which is equivalent to Equation \eqref{Cwzt}.
This completes the proof.
\qed

\section{Stable multivariate  Narayana polynomials}   \label{section-4}

%
%
%

The main objective of this section is to prove Theorem~\ref{thm-main}. By introducing a multivariate refinement of the Narayana grammar, we give a further generalization of $\widetilde{N}_n^A(x,y,s,t)$ and $\widetilde{N}_n^B(x,y,s,t)$, and then confirm their real stability via Borcea and Br\"and\'en's characterization of stability-preserving linear operators. Besides, two results derived from Theorem~\ref{thm-main} are given as applications of the theory of stable polynomials.

We first define a refined grammatical labeling for a labeled plane tree $T\in \mathcal{T}_{n}$.
We shall assign variables to each node and edge in $T$.
Following the previous section, we assign the variable $s$ to an edge $(\mathbf{i}, \mathbf{j})$ if it is proper, and assign the variable $t$ otherwise.
Besides, for any node $\mathbf{i}$, the label function $f(\mathbf{i})$ and the variable associated with it are defined as follows:
\begin{itemize}
	\item If $\mathbf{i}$ is a leaf node in $T$, then we let $f(\mathbf{i}) = \max \{ i, \alpha (\mathbf{i})  \}$ and assign the variable $x_{f(\mathbf{i})}$ to the leaf $\mathbf{i}$.
	\item If $\mathbf{i}$ is an interior node, we let $f(\mathbf{i})= \max\{{i, \beta(\mathbf{j})}\}$, where $\mathbf{j}$ is the old child of $\mathbf{i}$, and assign the variable $y_{f(\mathbf{i})}$ to the interior node $\mathbf{i}$.
\end{itemize}
Then, we define the refined weight of the tree $T$ on $[n]$, denoted $\mathbf{wt}(T)$, as the product of the variables corresponding to all nodes and edges of $T$. Namely,
\[
	\mathbf{wt}(T)= s^{ imp(T)} t^{n-1-imp(T)}\prod_{\mathbf{i} \in \mathcal{L}(T)} x_{f(\mathbf{i})} \prod_{\mathbf{j}  \in [n] \backslash  \mathcal{L}(T) } y_{f(\mathbf{j})},
\]
where $\mathcal{L}(T)$ is denoted the leaf set of $T$.

Take the tree $T$ in Figure \ref{Figure_label} as an example. For the leaf $\mathbf{2}$, we have $f(\mathbf{2})=4$ due to $\alpha(\mathbf{2})=4$ and thus assign the variable $x_{4}$ to the leaf $\mathbf{2}$.
Similarly, for the interior node $\mathbf{6}$, we have $f(\mathbf{6})=6$ due to $\beta(\mathbf{3})=1$ and thus assign the variable $y_{6}$ to the interior node $\mathbf{6}$.
Consequently, the tree~$T$ has the refined grammatical labeling as follows and $\mathbf{wt}(T)=s^3 t^3 x_3x_4x_5x_7y_3y_4y_6$.

\begin{figure}[h]
	\begin{center}
		\begin{tikzpicture}[scale=1]
			\node[tn,label=90:{$6(y_6)$}]{}[grow=down]
			[sibling distance=20mm,level distance=12mm]
			child {node [tn,label=180:{$3(y_3)$}](four){}[sibling distance=17mm]
					child {node [tn,label=180:{$1(x_3)$}](one){}
							edge from parent node[above left]{$t$}
						}
					child {node [tn,label=180:{$7(x_7)$}](two){}
							edge from parent node[above right ]{$s$}
						}
					edge from parent node[above left ]{$t$}
				}
			child {node [tn,label=180:{$5(x_5)$}](four){}
					edge from parent node[right ]{$s$}
				}
			child {node [tn,label=0:{$4(y_4)$}](four){}
					child {node [tn,label=0:{$2(x_4)$}](one){}
							edge from parent node[right ]{$t$}
						}
					edge from parent node[above right ]{$s$}
				};
		\end{tikzpicture}
		\caption{A labeled plane tree with its refined  grammatical labeling}\label{Figure_refined_label}
	\end{center}
\end{figure}
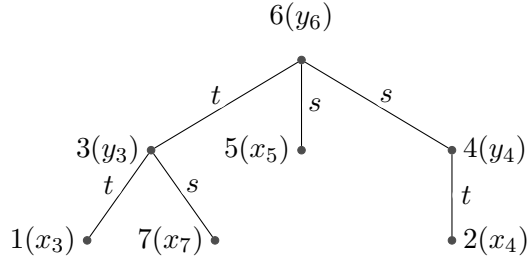

Recall that
\begin{equation}\label{Fn}
	F_n(\mathbf{x}, \mathbf{y}, s, t) = \sum_{T\in \mathcal{T}_{n+1}} \mathbf{wt}(T)
\end{equation}
and
\begin{equation}\label{Fnstar}
	F^{*}_n(\mathbf{x}, \mathbf{y}, s, t) = \sum_{T\in \mathcal{T}^*_{n}} \mathbf{wt}(T),
\end{equation}
where $\mathcal{T}_{n}$ is the set of labeled plane trees on $[n]$ and $\mathcal{T}^{*}_{n}$ is the set of labeled plane trees on $[n+2]$ in which $\mathbf{1}$ is the old leaf of $\mathbf{2}$.

We next consider the following refined context-free grammar:
$$G_n=\{
	x_k \rightarrow (s+t) x_{n+1} y_{n+1},\,
	y_k \rightarrow  (s+t) x_{n+1} y_{n+1}, \,
	s\rightarrow s(s x_{n+1}+t y_{n+1}),  \,
	t\rightarrow t(s x_{n+1}+t y_{n+1})\}.$$
From the insertion rule of the new node $\mathbf{n+1}$, one can easily derive the following relation between the grammar $G_n$ and the polynomials $F_n(\mathbf{x}, \mathbf{y}, s, t)$ and $F_n^*(\mathbf{x}, \mathbf{y}, s, t)$.

\begin{thm}
	Let $D_n$ be the linear operator associated with $G_n$. For $n\geq1$, it holds that
	\begin{equation}
		F_n(\mathbf{x}, \mathbf{y}, s, t)= D_{n} D_{n-1} \cdots D_1 (y_1)
	\end{equation}
	and
	\begin{equation}
		F^*_n(\mathbf{x}, \mathbf{y}, s, t)=  D_{n+1} D_{n}  \cdots D_2 (t).
	\end{equation}
\end{thm}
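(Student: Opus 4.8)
The plan is to prove both identities by induction on $n$, showing that each application of the operator $D_m$ to the weight-generating polynomial of trees in $\mathcal{T}_m$ (resp. $\mathcal{T}^*_{m-1}$) exactly mirrors the insertion of the node $\mathbf{m+1}$ as described in Section~\ref{section-2}. The essential point is that the refined grammar $G_n$ was designed so that each of its four substitution rules corresponds to one of the bookkeeping rules already verified for the non-refined grammar \eqref{grammar_simple} in the proof of \eqref{eq:wt}; what remains is to check that the \emph{subscripts} $n+1$ appearing in $G_n$ are the correct values of the label function $f$ on the newly created nodes.

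First I would set up the base case: for $n=1$, the two trees in $\mathcal{T}_2$ carry refined weights $s\,x_2 y_1$ and $t\,x_2 y_1$, while $D_1(y_1) = (s+t)x_2 y_2$ — wait, this must be reconciled, so I would carefully recompute $f$ on the root and leaf of each $2$-node tree using the definitions $f(\mathbf{i})=\max\{i,\alpha(\mathbf{i})\}$ for leaves and $f(\mathbf{i})=\max\{i,\beta(\mathbf{j})\}$ for interior nodes, confirming that the base case matches $D_1(y_1)$. Then, assuming $F_{n-1}(\mathbf{x},\mathbf{y},s,t) = \sum_{T\in\mathcal{T}_n}\mathbf{wt}(T)$, I would fix a tree $T\in\mathcal{T}_n$ and analyze the four insertion cases N1, N2, E1, E2 in turn. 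For each case I must verify two things: (a) the power of $s$ versus $t$ changes correctly — i.e. the inserted edge is proper or improper as predicted, which was already essentially checked in Section~\ref{section-3}; and (b) the new node receives variable $x_{n+1}$ or $y_{n+1}$, and every \emph{old} node whose label might be disturbed by the relabeling in N2 or E2 either keeps its $f$-value or has its $f$-value recomputed consistently.

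The main obstacle I expect is step (b): in cases N2 and E2 the node $\mathbf{i}$ is relabeled to $\mathbf{n+1}$ and a new node $\mathbf{i}$ is created, and the subtree structure is rearranged (in E2 the children are redistributed). I would need to show that (i) the node now labeled $\mathbf{n+1}$ has $f$-value exactly $n+1$ — this is forced because $n+1$ is the largest label present, so $\max\{n+1,\cdot\}=n+1$ regardless of the $\alpha$ or $\beta$ value; (ii) the node now labeled $\mathbf{i}$ (a leaf in N2, an interior node in E2) gets the $f$-value that the old node $\mathbf{i}$ had, so that the substituted variable $x_{f(\mathbf{i})}$ or $y_{f(\mathbf{i})}$ in the old weight is correctly reinterpreted — here I must check that $\alpha$ of the new leaf $\mathbf{i}$ in N2 equals the old $\beta$-relevant quantity, and in E2 that $\beta$ of the old child of the new interior node $\mathbf{i}$ is unchanged by the redistribution of siblings; and (iii) no \emph{other} node's $f$-value changes. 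Point (iii) is delicate for E2 because moving elder siblings of $\mathbf{j}$ under $\mathbf{i}$ alters $\alpha$-values of those moved nodes and $\beta$-values along the path — I would argue that $\beta$-values of entire subtrees are preserved (the subtrees themselves are not changed, only reparented) and that the relevant $\alpha$-values feeding into $f$ are preserved because $f$ for a leaf uses $\max\{i,\alpha\}$ and the minimum-over-elder-siblings structure is arranged precisely so that these maxima are invariant. Once these local checks are complete, summing over all trees $T\in\mathcal{T}_n$ and all four insertion choices yields $D_n\bigl(\sum_{T\in\mathcal{T}_n}\mathbf{wt}(T)\bigr) = \sum_{T'\in\mathcal{T}_{n+1}}\mathbf{wt}(T')$, which is $F_n$.

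Finally, for the type-B identity I would run the same induction on $\mathcal{T}^*_{n}$, starting from $D_2(t)$ with the single tree in $\mathcal{T}^*_0$ (the edge on $[2]$ with $\mathbf{2}$ the root, weight $t$), and simply note — exactly as in the proof of \eqref{grammar_B} — that forbidding insertion at the distinguished nodes $\mathbf{1}$ and $\mathbf{2}$ keeps $\mathbf{1}$ a leaf and keeps $\mathbf{1}$ the old child of $\mathbf{2}$, so that $D_{m+1}$ applied at step $m$ uses subscript $m+1$ matching $f$ on the newly inserted node, and the composition $D_{n+1}D_n\cdots D_2(t)$ generates exactly $\sum_{T\in\mathcal{T}^*_n}\mathbf{wt}(T) = F^*_n$. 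I expect this second part to be routine given the first.
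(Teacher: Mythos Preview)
Your approach is the same one the paper intends (the paper does not give a detailed proof, only the sentence ``From the insertion rule of the new node $\mathbf{n+1}$, one can easily derive\ldots''). However, your point (ii) as stated for case N2 is wrong, and if you try to verify it you will get stuck. In N2 the new leaf $\mathbf{i}$ has parent $\mathbf{n+1}$ and no elder siblings, so $\alpha(\mathbf{i})=n+1$ and hence $f(\mathbf{i})=\max\{i,n+1\}=n+1$, \emph{not} the old $f$-value of $\mathbf{i}$. This is exactly what is needed: the node rules $x_k\to(s+t)x_{n+1}y_{n+1}$ and $y_k\to(s+t)x_{n+1}y_{n+1}$ \emph{erase} the old variable rather than preserve it, so the correct check for N1/N2 is that \emph{both} affected nodes acquire $f$-value $n+1$ --- which is automatic since one of them carries the label $n+1$ and the other has $\alpha$ or $\beta$ equal to $n+1$. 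Your point (ii) is correct only for E2, where the edge rules $s\to s(\cdots)$, $t\to t(\cdots)$ leave the node variable $y_{f(\mathbf{i})}$ untouched and you genuinely need the repositioned interior node $\mathbf{i}$ to keep its old $f$-value $\max\{i,\beta(\mathbf{k_1})\}$ --- which it does, since its old child is still $\mathbf{k_1}$.

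Once you separate the bookkeeping this way --- node rules \emph{replace}, edge rules \emph{multiply} --- the remaining checks in (iii) are all routine: every relevant $\alpha$- or $\beta$-value that enters an unchanged node's $f$ is a minimum over a set that either is literally unchanged or has only gained the element $n+1$, which never affects a minimum over labels in $[n]$. (Your base-case guess $s\,x_2y_1+t\,x_2y_1$ is also off for the same reason: in both two-node trees the interior node has $f$-value $2$, giving $(s+t)x_2y_2=D_1(y_1)$.) The type-B argument then goes through exactly as you outline; just note that $D_{m+1}$ introduces subscript $m+2$, matching the new node $\mathbf{m+2}$ added when passing from $\mathcal{T}^*_{m-1}$ to $\mathcal{T}^*_m$.
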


Now we prove the first part of Theorem~\ref{thm-main}.
\begin{proof}[Proof of Theorem~\ref{thm-main} (i)]
	The grammar $G_n$ can be reduced to $G$, as given in \eqref{grammar_simple}, by setting $x_k=x$ and $y_k=y$. According to Theorem \ref{Thm_TypeA}, we know that $F_n(\mathbf{x}, \mathbf{y}, s, t)$ (respectively $F^*_n(\mathbf{x}, \mathbf{y}, s, t)$ ) is a generalization of $N_n^A(x)$ (respectively $N_n^B(x)$).
\end{proof}

To prove the stability of multivariate polynomials, we need a multi-affine version of  Borcea and Br{\"a}nd{\'e}n's characterization.
A polynomial $f(\bm{x})$ is said to be \emph{multi-affine} if the power of each indeterminate $x_i$ is at most one.
For a set $\mathcal{P}$ of polynomials, let $\mathcal{P}^{MA}$ be the set of multi-affine polynomials in $\mathcal{P}$.
Borcea and Br{\"a}nd{\'e}n~\cite{Borcea2009Lee} gave a complete characterization of the linear operators which preserve stable multivariate polynomials.

\begin{lem}[{\cite[Theorem 3.5]{Wagner2011Multivariate}}]
	\label{lem-bb}
	Let $T: \mathbb{R}[\bm{x}]^{MA} \to \mathbb{R}[\bm{x}]$ be a linear operator acting on the
	variables $\bm{x}=(x_1, \dots, x_n)$.  If the polynomial
	$$G_T:=T\left(\prod_{i=1}^{n}(x_i+\hat{x}_i) \right)$$
	is a   stable polynomial  of variables $\bm{x}=(x_1,x_2,\ldots,x_n)$ and $\bm{\hat{x}}=(\hat{x}_1,\hat{x}_2,\ldots,\hat{x}_n)$,
	then $T$ preserves real stability.
\end{lem}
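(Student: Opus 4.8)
The statement is precisely the multi-affine case of the Borcea--Br\"and\'en transformer theorem from \cite{Borcea2009Lee}, so the quickest route is to cite \cite[Theorem 3.5]{Wagner2011Multivariate} directly; for completeness I sketch the standard argument. The plan is to write $T(f)$ explicitly in terms of the symbol $G_T$, observe that the resulting expression couples two stable polynomials through the substitution $\hat{x}_i \leftrightarrow \partial_{\hat{x}_i}$, and then invoke the Lieb--Sokal lemma $n$ times to conclude.

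First I would record the algebraic identity relating $T$ and $G_T$. Expanding $\prod_{i=1}^{n}(x_i+\hat{x}_i)=\sum_{S\subseteq[n]}\bm{x}^S\bm{\hat{x}}^{S^c}$ and using linearity of $T$ gives $G_T=\sum_{S\subseteq[n]}T(\bm{x}^S)\,\bm{\hat{x}}^{S^c}$, which in particular has degree at most one in each $\hat{x}_i$; hence $T(\bm{x}^S)$ is the coefficient of $\bm{\hat{x}}^{S^c}$ in $G_T$, namely $\bigl(\prod_{i\in S^c}\partial_{\hat{x}_i}\,G_T\bigr)\big|_{\bm{\hat{x}}=0}$. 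Consequently, for every multi-affine $f=\sum_{S}c_S\bm{x}^S$,
\[
	T(f)(\bm{x})=\Bigl[\,\widetilde{f}(\partial_{\hat{x}_1},\dots,\partial_{\hat{x}_n})\,G_T(\bm{x},\bm{\hat{x}})\,\Bigr]_{\bm{\hat{x}}=0},
	\qquad
	\widetilde{f}(\bm{u}):=\sum_{S}c_S\,\bm{u}^{S^c}=\bm{u}^{[n]}f(1/u_1,\dots,1/u_n),
\]
where $\widetilde{f}$ is the reciprocal of $f$ and $\widetilde{f}(\partial_{\hat{x}_1},\dots,\partial_{\hat{x}_n})$ is the associated constant-coefficient differential operator.

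Next I would check that the two inputs on the right are stable. The symbol $G_T$ is stable by hypothesis. For $\widetilde{f}$: since $f$ is real stable it does not vanish on $\mathbb{H}_+^n$ and, having real coefficients, it does not vanish on $\mathbb{H}_-^n$ either; because $\bm{u}\in\mathbb{H}_+^n$ forces $(1/u_1,\dots,1/u_n)\in\mathbb{H}_-^n$ and $\bm{u}^{[n]}\neq 0$ there, the polynomial $\widetilde{f}$ is nonvanishing on $\mathbb{H}_+^n$, i.e.\ stable. It then remains to establish the crucial fact that the contraction $A\mapsto\bigl[B(\partial_{\hat{x}_1},\dots,\partial_{\hat{x}_n})A\bigr]_{\bm{\hat{x}}=0}$, which sends $A=\sum_S A_S(\bm{x})\bm{\hat{x}}^S$ to $\sum_S b_S A_S(\bm{x})$ for a stable multi-affine $B=\sum_S b_S\bm{\hat{x}}^S$, carries a stable polynomial $A$ with $\deg_{\hat{x}_i}A\le 1$ to a stable polynomial in $\bm{x}$; applying this with $A=G_T$ and $B=\widetilde{f}$ (whose coefficients satisfy $b_S=c_{S^c}$) reproduces $\sum_S b_S A_S=\sum_S c_{S^c}T(\bm{x}^{S^c})=T(f)$, finishing the proof. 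The contraction claim itself I would prove by peeling off the auxiliary variables $\hat{x}_1,\hat{x}_2,\dots$ one at a time: each single step is an instance of the Lieb--Sokal lemma, used together with the elementary facts that specializing a variable of a stable polynomial to a real number and reversing a stable polynomial in one variable ($w\mapsto w^{\deg_w A}A(1/w)$) both preserve stability, with degenerate identically-zero cases dispatched by a Hurwitz-type limiting argument.

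The main obstacle is exactly this last step: all the rest is bookkeeping, whereas the assertion that coupling $\hat{x}_i$ with $\partial_{\hat{x}_i}$ against a stable polynomial preserves stability is the genuine analytic core of the Borcea--Br\"and\'en theory, resting on the Lieb--Sokal lemma (which is itself proved via a Rouch\'e/Hurwitz argument, or by realizing the operator as a limit of compositions of elementary stability preservers). Since this machinery is standard, in the paper I would simply cite \cite[Theorem 3.5]{Wagner2011Multivariate}, equivalently \cite{Borcea2009Lee}, rather than reproduce it.
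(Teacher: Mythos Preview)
Your sketch is correct and follows the standard Borcea--Br\"and\'en route (symbol identity, reciprocal, Lieb--Sokal contraction variable by variable), but there is nothing to compare: the paper gives no proof of this lemma and simply quotes it from \cite[Theorem 3.5]{Wagner2011Multivariate} as an external tool. Your own closing remark --- that in the paper one would just cite the result --- is exactly what the authors do.
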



We next prove the second part of Theorem~\ref{thm-main}.
\begin{proof}[Proof of Theorem~\ref{thm-main} (ii)]
	Let $F_n= F_n(\mathbf{x}, \mathbf{y},s,t)$ and $F^{*}_n= F^{*}_n(\mathbf{x}, \mathbf{y},s,t)$.
	Notice that each monomial in $F_{n-1}$ is in form $s^kt^{n-1-k}f(\mathbf{x},\mathbf{y})$. Thus, $F_0=y_1$ and
	\begin{align*}
		F_n & = D_n(F_{n-1})                                                               \\
		    & =s(s x_{n+1}+t y_{n+1})\frac{\partial F_{n-1}}{\partial s}
		+t(s x_{n+1}+t y_{n+1})\frac{\partial F_{n-1}}{\partial t}                         \\
		    & \, \quad +(s+t)x_{n+1} y_{n+1} \sum_{k=1}^{n}
		\left( \frac{\partial}{\partial x_k}+\frac{\partial}{\partial y_k}  \right)F_{n-1} \\
		    & =(n-1)(s x_{n+1}+t y_{n+1})F_{n-1}+(s+t)x_{n+1} y_{n+1} \sum_{k=1}^{n}
		\left( \frac{\partial}{\partial x_k}+\frac{\partial}{\partial y_k}  \right)F_{n-1} \\
		    & =T_n(F_{n-1}),
	\end{align*}
	where $T_n$ denote the linear operator
	$$T_n=(n-1) (s\, x_{n+1}+t\, y_{n+1}) + (s+t)x_{n+1} y_{n+1} \sum_{k=1}^{n}
		\left( \frac{\partial}{\partial x_k}+\frac{\partial}{\partial y_k}  \right)$$
	in variables $\bm{x}$ and $\bm{y}$ with parameters $s$ and $t$.
	Similarly, $F^{*}_1=t(s x_3+t y_3)$ and $F^{*}_n= T_{n+1}(F^{*}_{n-1})$. It follows that the power of each indeterminate $x_i$ and $y_i$  is at most one, and thus $F_n$ and $F^{*}_n$ are multi-affine polynomials in variables $\bm{x}$ and $\bm{y}$.

	We proceed to show the linear operator $T_n$ preserves real stability. By Lemma~\ref{lem-bb}, we shall prove
	$$G_{T_n}= T_n  \left(\prod_{k=1}^{n} (x_k+\hat{x}_k) (y_k+\hat{y}_k)\right)$$
	is stable.
	The linear operator $T_n$ actions on the $\mathbf{x}$- and $\mathbf{y}$-variables and treats the variables $s, t$ as positive real constants.
	A straightforward computation leads to
	\begin{align*}
		\frac{G_{T_n}}{x_{n+1} y_{n+1}  \prod_{k=1}^{n} (x_k+\hat{x}_k) (y_k+\hat{y}_k)} &
		=
		(n-1) \left(\frac{s}{y_{n+1}}+\frac{t}{x_{n+1}}\right)  +(s+t)\sum_{k=1}^{n}
		\left( \frac{1}{x_k+\hat{x}_k}+\frac{1}{y_k+\hat{y}_k}  \right).
	\end{align*}
	Each term on the right side has a negative imaginary part whenever all variables have positive imaginary parts. Hence we prove the real stability of $G_{T_n}$. Therefore, we complete the proof of the second part of Theorem~\ref{thm-main}.
\end{proof}

Once multivariate  polynomials are shown to be real stable, we can then reduce
them to real-rooted polynomials by using the following operations.

\begin{lem}[{\cite[Lemma 2.4]{Wagner2011Multivariate}}]\label{lem-stable}
	Given $i,j \in [n]$, the following operations preserve real stability of $f \in \mathbb{R}[\bm{x}]$:
	\begin{itemize}
		\item \emph{Differentiation:} $f \mapsto \partial f/\partial x_i.$
		\item \emph{Diagonalization:} $f \mapsto f|_{x_i=x_j}.$
		\item	\emph{Specialization:} for $a \in \mathbb{R}$, $f\mapsto f|_{x_i = a}.$
	\end{itemize}
\end{lem}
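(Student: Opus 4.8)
The plan is to treat the three operations separately, each time reducing the $n$-variable claim to a classical one-variable fact about the location of zeros, supplemented by a limiting argument in the one case (specialization) where the target point is real. Throughout I use that ``identically zero'' counts as stable, so degenerate outputs cause no trouble. \emph{Diagonalization} is immediate: if $f$ is stable and $(x_k)_{k\neq i}$ are assigned values in $\mathbb{H}_+$, then evaluating $f|_{x_i=x_j}$ at such a point is the same as evaluating $f$ at a point of $\mathbb{H}_+^{n}$ whose $i$-th and $j$-th coordinates both equal the chosen value of $x_j\in\mathbb{H}_+$; the value is therefore nonzero, so $f|_{x_i=x_j}$ is either identically zero or nonvanishing on $\mathbb{H}_+^{n-1}$, hence stable.

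For \emph{differentiation}, fix values in $\mathbb{H}_+$ for all variables other than $x_i$ and regard $g(z)=f(\dots,z,\dots)$ as a univariate polynomial in $z$. Stability of $f$ says $g$ has no zero in $\mathbb{H}_+$, so every zero of $g$ lies in the closed lower half-plane $\overline{\mathbb{H}_-}$. Since a half-plane is convex, the Gauss--Lucas theorem forces every zero of $g'$ into the convex hull of the zeros of $g$, hence again into $\overline{\mathbb{H}_-}$; therefore $g'(z)$, which is $\partial f/\partial x_i$ evaluated at the chosen point, is nonzero for $z\in\mathbb{H}_+$. As the chosen values were arbitrary, $\partial f/\partial x_i$ is stable (the cases $\partial f/\partial x_i\equiv 0$, and $g$ constant in $z$, being trivial).

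For \emph{specialization} at $a\in\mathbb{R}$, approximate $a$ by points of $\mathbb{H}_+$ tending to it from above. For each small $\varepsilon>0$ the polynomial $f|_{x_i=a+\varepsilon\sqrt{-1}}$ is stable in the remaining variables (it is just $f$ evaluated with its $i$-th slot in $\mathbb{H}_+$), and as $\varepsilon\to 0^+$ this family converges, uniformly on compact subsets of $\mathbb{H}_+^{n-1}$, to $f|_{x_i=a}$. A standard Hurwitz-type theorem in several variables---a locally uniform limit of analytic functions that are nonvanishing on a connected open set is either nonvanishing there or identically zero---then shows $f|_{x_i=a}$ is stable.

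The only genuinely substantive input is the half-plane form of the Gauss--Lucas theorem used for differentiation (equivalently, that the non-vanishing region of a real stable polynomial is inherited by $\partial_{x_i}f$), together with the multivariate Hurwitz statement for specialization; everything else is bookkeeping. I expect the main place to be careful is making the degenerate cases airtight---outputs that are identically zero, and drops in the $x_i$-degree after substitution---so the argument is rigorous rather than merely morally correct.
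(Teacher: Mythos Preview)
Your argument is correct and is essentially the standard proof: diagonalization by direct evaluation, differentiation via Gauss--Lucas applied to one-variable slices, and real specialization via an approximation from $\mathbb{H}_+$ together with the multivariate Hurwitz theorem. Note, however, that the paper does not actually prove this lemma at all---it simply quotes it from Wagner's survey \cite[Lemma~2.4]{Wagner2011Multivariate}---so there is no ``paper's own proof'' to compare against; your write-up supplies exactly the kind of argument one finds in that reference.
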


As an application of Theorem~\ref{thm-main}, we derive the following result from Lemma~\ref{lem-stable}, which extends the real-rootedness of $N_n^A(x)$ and $N_n^B(x)$.
\begin{thm}
	For any positive real numbers $s$ and $t$, the polynomials
	$$\widetilde{N}_n^A(x,1,s,t) = \sum_{k=1}^n \left(\sum_{r=0}^n \widetilde{N}(n,k,r) s^{n-r} t^{r} \right) x^{k}$$  and
	$$\widetilde{N}_n^B(x,1,s,t) = \sum_{k=0}^n \left(\sum_{r=0}^{n+1} \widetilde{N}_B(n,k,r)s^{n+1-r}t^r \right) x^{k}$$ have only real roots.
\end{thm}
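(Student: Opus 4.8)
The plan is to obtain both univariate polynomials as specializations of the multivariate stable polynomials $F_n(\mathbf{x},\mathbf{y},s,t)$ and $F^*_n(\mathbf{x},\mathbf{y},s,t)$, whose real stability is guaranteed by Theorem~\ref{thm-main}(ii), and then invoke the stability-preserving operations recorded in Lemma~\ref{lem-stable}. First I would recall that by the grammatical labeling (Figure~\ref{Figure_refined_label}), setting $x_k = x$ and $y_k = y$ for all $k$ collapses the refined weight $\mathbf{wt}(T)$ back to $wt(T) = s^{imp(T)} t^{\,prop(T)} x^{\ell(T)} y^{i(T)}$, so that $F_n(x,x,\dots,x,y,y,\dots,y,s,t) = \widetilde{N}_n^A(x,y,s,t)$ and $F^*_n(x,\dots,x,y,\dots,y,s,t) = \widetilde{N}_n^B(x,y,s,t)$; these are exactly the generating polynomials by leaves and improper edges.

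The key steps, in order: (1) By Theorem~\ref{thm-main}(ii), $F_n(\mathbf{x},\mathbf{y},s,t)$ is real stable in the variables $\mathbf{x} = (x_1,\dots,x_{n+1})$ and $\mathbf{y} = (y_1,\dots,y_{n+1})$ for any fixed positive reals $s,t$. (2) Apply the \emph{Diagonalization} operation of Lemma~\ref{lem-stable} repeatedly to identify all the $x_k$ with a single variable $x$ and all the $y_k$ with a single variable $y$; stability is preserved, yielding real stability of $\widetilde{N}_n^A(x,y,s,t)$ as a bivariate polynomial in $(x,y)$. (3) Apply the \emph{Specialization} operation with $y = 1$ (legitimate since $1 \in \mathbb{R}$), obtaining real stability of $\widetilde{N}_n^A(x,1,s,t)$ as a univariate polynomial in $x$. (4) A univariate real-stable polynomial is precisely one with only real roots, so $\widetilde{N}_n^A(x,1,s,t)$ is real-rooted. (5) Run the identical argument with $F^*_n$ in place of $F_n$ to conclude that $\widetilde{N}_n^B(x,1,s,t)$ is real-rooted. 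One should also note that the coefficient extraction matches: after setting $y=1$, the coefficient of $x^k$ in $\widetilde{N}_n^A(x,1,s,t)$ is $\sum_{r} \widetilde{N}(n,k,r) s^{n-r} t^r$, exactly as in the statement, and similarly for type B with the shift $x^{k-1}$ absorbed appropriately.

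Strictly speaking there is a bookkeeping subtlety I would address: the diagonalization steps must be carried out so that each intermediate polynomial remains in $\mathbb{R}[\mathbf{x},\mathbf{y}]$, which is automatic since Lemma~\ref{lem-stable} is stated for diagonalization within a fixed polynomial ring; iterating $f \mapsto f|_{x_i = x_j}$ a finite number of times is fine. I would also remark that the parameters $s,t$ genuinely range over all positive reals, so this is a strict strengthening of the classical real-rootedness of $N_n^A(x)$ and $N_n^B(x)$, recovered at $s = t = 1$ up to the constant factors $(n+1)!$ and $n!$ from Theorem~\ref{thm-main}(i).

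I do not anticipate a serious obstacle here: the entire content has been front-loaded into Theorem~\ref{thm-main}(ii), and this corollary is a routine application of the specialization/diagonalization toolkit. The only point requiring a moment's care is confirming that the specialization $y_k = y$ then $y = 1$ does not accidentally collapse the whole polynomial to zero (which would make the "real-rootedness" claim vacuous); this is ruled out because $\widetilde{N}_n^A(x,1,s,t)$ has positive leading coefficient in $x$ (the term $k = n$ survives, corresponding to the path-like trees with all leaves), and likewise for type B via the $N_n^B(x)$ specialization at $s=t=1$.
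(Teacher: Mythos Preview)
Your proposal is correct and matches the paper's approach exactly: the paper states this theorem as a direct application of Theorem~\ref{thm-main}(ii) via the stability-preserving operations of Lemma~\ref{lem-stable}, and you have simply spelled out the diagonalization and specialization steps that are left implicit there. The extra care you take to check that the specialized polynomial does not vanish identically is a nice touch, though the paper does not bother with it.
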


Another application of Theorem~\ref{thm-main} is a new version of multivariate stable refinement of the second-order Eulerian polynomial. By setting $t=0$, we restrict labeled rooted trees to those with all edges proper.
Such trees are called \emph{increasing plane trees}, the usual definition of which is requiring that the labels along any path from the root to a leaf are increasing.
Due to the famous glove bijection~\cite{Janson2008Plane}, we adopt the notion of Stirling permutations to present our result.
A permutation $\pi=\pi_1\cdots\pi_{2n}$ on the multiset $[n]_2=\{1,1,2,2,\cdots,n,n\}$ is called a \emph{Stirling permutation} if  $\pi_i=\pi_j$  implies $\pi_k<\pi_i$ for any $1\leq i<k<j\leq n$. We let $\pi_0=\pi_{2n+1}=0$ for notional convenience.
Let $\mathcal{Q}_n$ denote the set of Stirling permutations on $[n]_2$.
The second Eulerian polynomial can be interpreted as the generating polynomial of Stirling permutations over one  among ascent, plateau, and descent statistics, whose triple equidistribution was discovered by B\'ona~\cite{Bona2009Real}. For more properties about Stirling permutations, we refer the reader to~\cite{Haglund2012Stable, Janson2008Plane, Janson2011Generalized, Ma2023Stirling}.

For a Stirling permutation $\pi \in \mathcal{Q}_n$, let $P(\pi)$ denote the set of plateaux in $\pi$, namely, $P(\pi)= \{i:   \pi_{i}=\pi_{i+1}\}$.
We also let $FA(\pi)$ denote the set of ascents whose corresponding letter is the first appearance of the letter in $\pi$.  More precisely,
$$FA(\pi)= \{i: 0\leq i< 2n, \, \pi_{i}<\pi_{i+1} \mbox{ and } \pi_{j}\neq\pi_{i} \mbox{ for all } 1 \le j< i\},$$
where $\pi_0=0$.

\begin{thm}
	For any positive integer $n$,
	the multivariate polynomial of Stirling permutations
	\begin{align}\label{eq-second}
		\sum_{\pi \in \mathcal{Q}_{n}}  \, \prod_{{i} \in P(\pi)} x_{\pi_{i}} \prod_{{j} \in FA(\pi)} y_{\pi_{j+1}}
	\end{align}
	is real stable over the variables $\mathbf{x}$ and  $\mathbf{y}$.
\end{thm}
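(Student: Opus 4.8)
The plan is to obtain the polynomial in \eqref{eq-second} as a specialization of $F_n(\mathbf{x},\mathbf{y},s,t)$ (or more precisely of the $t=0$ restriction of the multivariate grammar), so that the real stability follows immediately from Theorem~\ref{thm-main}(ii) together with the specialization and diagonalization operations of Lemma~\ref{lem-stable}. First I would set $t=0$ in the refined grammar $G_n$ and observe, as noted in the text, that this restricts the labeled plane trees in $\mathcal{T}_{n+1}$ to those all of whose edges are proper, i.e.\ to increasing plane trees on $[n+1]$. Under the glove bijection~\cite{Janson2008Plane} between increasing plane trees on $n+1$ nodes and Stirling permutations in $\mathcal{Q}_n$, I would track exactly how the refined labels $x_{f(\mathbf{i})}$ on leaves and $y_{f(\mathbf{j})}$ on interior nodes translate into statistics of the Stirling permutation. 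The expectation is that a leaf $\mathbf{i}$ with $f(\mathbf{i})=\max\{i,\alpha(\mathbf{i})\}$ corresponds to a plateau of $\pi$ with value $\pi_i$, while an interior node $\mathbf{j}$ with $f(\mathbf{j})=\max\{j,\beta(\text{old child})\}$ corresponds to a first-appearance ascent, with the index $f(\mathbf{j})$ matching the letter $\pi_{\cdot+1}$ recorded in $FA(\pi)$. This is the combinatorial heart of the argument.

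More concretely, I would argue as follows. Since $F_n(\mathbf{x},\mathbf{y},s,t)$ is real stable over $\mathbf{x},\mathbf{y}$ for all positive $s,t$ by Theorem~\ref{thm-main}(ii), and real stability is preserved under taking limits that do not annihilate the polynomial (equivalently, one can carry a $t=0$ specialization through the construction directly, using the grammar $G_n|_{t=0}$ whose associated operator is again of the form covered by Lemma~\ref{lem-bb}), the polynomial $F_n(\mathbf{x},\mathbf{y},s,0)$ is real stable. Then set $s=1$ via the Specialization operation in Lemma~\ref{lem-stable}. What remains is to identify $F_n(\mathbf{x},\mathbf{y},1,0)$ with the sum in \eqref{eq-second}, up to a harmless overall power of a single variable coming from the root or from the index conventions (the root of an increasing plane tree always carries label $\mathbf{1}$ and is interior, contributing a fixed $y_1$, which matches the convention $\pi_0=0$ and the fact that $0\in FA(\pi)$ always). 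I would verify this identification by induction on $n$, using the insertion algorithm of Section~\ref{section-2} restricted to the proper-edge operations (N1, N2, E1 only, since E2 can also create proper edges — I would need to check which insertion steps survive at $t=0$) and checking that each step matches the standard recursive construction of Stirling permutations by inserting the pair $n,n$ into a slot.

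The main obstacle I anticipate is precisely the bookkeeping in this bijective identification: one must confirm that the \emph{index} of the variable, not merely whether an $x$ or a $y$ appears, is transported correctly by the glove bijection. The refined labeling uses $f(\mathbf{i})=\max\{i,\alpha(\mathbf{i})\}$ and $f(\mathbf{j})=\max\{j,\beta(\mathbf{k})\}$, and it is not a priori obvious that these maxima correspond to $\pi_i$ (the plateau value) and $\pi_{j+1}$ (the value immediately after a first-appearance ascent) respectively; establishing this requires understanding how $\alpha$- and $\beta$-values of nodes in an increasing plane tree encode the relative order of entries in the associated Stirling permutation. A secondary, more routine point is to make sure the $t=0$ specialization genuinely preserves stability here — since naive specialization of a parameter to $0$ is covered by Lemma~\ref{lem-stable} only when $0$ is a real value being substituted for one of the stability variables, and $t$ is a parameter rather than one of $\mathbf{x},\mathbf{y}$; the cleanest route is to rerun the operator argument in the proof of Theorem~\ref{thm-main}(ii) with $t=0$ from the start, noting that $T_n|_{t=0}$ still satisfies the hypothesis of Lemma~\ref{lem-bb} because $G_{T_n}|_{t=0}$ divided by its obvious monomial factor is still a sum of terms each with negative imaginary part on the upper half-plane.
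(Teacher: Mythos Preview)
Your approach is essentially the same as the paper's. The bookkeeping obstacle you anticipate dissolves immediately once you note that in an increasing plane tree every non-root node $\mathbf{i}$ has $\alpha(\mathbf{i})<i$ and every node $\mathbf{j}$ has $\beta(\mathbf{j})=j$, so the refined labels collapse to $f(\mathbf{i})=i$ on leaves and $f(\mathbf{i})=j$ (the label of the old child) on interior nodes---these match plateau values and first-appearance-ascent successors under the glove bijection exactly as you hoped; your observation that the $t=0$ specialization should be justified by rerunning the $T_n$ argument with $t=0$ from the start is also the cleanest fix (the paper is actually silent on this point).
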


\pf Let $\mathcal{I}_n$ be the set of increasing plane trees on $[n]$.
For an increasing tree $T \in \mathcal{I}_n$, it can be seen
\[
	f(\mathbf{i})=\left\{
	\begin{aligned}
		i, & \quad \mathbf{i} \mbox{ is a leaf in }T,                                            \\
		j, & \quad  \mathbf{i} \mbox{ is an  interior node in T with its old child } \mathbf{j}.
	\end{aligned}
	\right.
\]
Thus we have
\[
	F_{n-1}(\mathbf{x},\mathbf{y},s,0)=t^{n-1}\sum_{T\in \mathcal{I}_n }\prod_{\mathbf{i} \in \mathcal{L}(T)} x_{i} \prod_{\mathbf{j}  \in [n] \backslash \mathcal{L}(T) } y_{f(\mathbf{j})}
\]
is real stable over the variables $\mathbf{x}$ and  $\mathbf{y}$.

Given an increasing tree $T \in \mathcal{I}_n$, let $\pi$ be the corresponding permutation under the glove bijection (also known as the ``depth-first walk'' )~\cite{Janson2008Plane, Janson2011Generalized}.
It is easy to verify that $\mathbf{i}$ is a leaf in $T$ if and only if the two $i$'s appearing consecutively contribute a plateau to $\pi$.
Meanwhile, $\mathbf{i}$ is an interior node with its old child $\mathbf{j}$ in $T$ if and only if the first occurrence of ${i}$ appears as an ascent bottom in $\pi$, whose next letter is ${j}$.
Removing the first and last letter 1 in $\pi$ and decreasing each
remaining letter by one, we obtain a Stirling permutation on $[n-1]_2$.
This indicates that
\[
	F_{n-1}(\mathbf{x},\mathbf{y},s,0)=t^{n-1}\sum_{\sigma \in \mathcal{Q}_{n-1}}  \, \prod_{k \in P(\sigma)} x_{\sigma_{k}+1} \prod_{\ell \in FA(\sigma)} y_{\sigma_{\ell+1}+1}.
\]
Finally, we complete the proof of the stability of the multivariate polynomial \eqref{eq-second} from that of $F_{n-1}(\mathbf{x},\mathbf{y},s,0)$.
\qed

By letting $x_{i}=x$ and $ y_{i}=1$ in~\eqref{eq-second}, our multivariate polynomial can be reduced to the second Eulerian polynomial in terms of plateau statistic.
It is remarkable to mention that our polynomial~\eqref{eq-second} is a distinct generalization of the second Eulerian polynomial from the one given by Haglund and Visontai~\cite{Haglund2012Stable}.

\vskip 3mm
\noindent {\bf Acknowledgments.}
The authors wish to thank William Y.C. Chen for his courses on context-free grammars, especially the Dumont-Ramamonjisoa grammar on Cayley trees during our graduate study at Nankai University.
The authors thank  Per Alexandersson for his helpful discussion after the first arXiv version.
Yang was supported by the National Natural Science Foundation of China (No.~12001404), and Zhang was supported by the National Natural Science Foundation of China~(No.~12171362).

\begin{thebibliography}{10}
	\bibitem{Bona2009Real}
	{M.~B\'ona}, Real zeros and normal distribution for statistics on Stirling permutations defined by Gessel and Stanley, SIAM J. Discrete Math., \textbf{23} (2009), 401--406.


	\bibitem{Borcea2009Lee}
	J.~Borcea and P.~Br{\"a}nd{\'e}n, The {L}ee-{Y}ang and {P}\'olya-{S}chur programs. {I}. {L}inear operators preserving stability, \emph{Invent. Math.}, \textbf{177} (2009), 541--569.

	\bibitem{Borcea2009Leea}
	J.~Borcea and P.~Br{\"a}nd{\'e}n, The {L}ee-{Y}ang and {P}\'olya-{S}chur programs. {II}. {T}heory of stable polynomials and applications, \emph{Comm. Pure Appl. Math.}, \textbf{62} (2009), 1595--1631.

	\bibitem{Borcea2009Polya}
	J.~Borcea and P.~Br{\"a}nd{\'e}n, P\'olya-{S}chur master theorems for circular domains and their boundaries, \emph{Ann. of Math.}, \textbf{170} (2009), 465--492.

	\bibitem{Braenden2006linear}
	P.~Br\"{a}nd\'{e}n, On linear transformations preserving the {P}\'{o}lya frequency property, \emph{Trans. Amer. Math. Soc.}, \textbf{358} (2006), 3697--3716.

	\bibitem{Braenden2015Unimodality}
	P.~Br{\"a}nd{\'e}n, Unimodality, log-concavity, real-rootedness and beyond, in \emph{Handbook of Enumerative Combinatorics}, Edited by Mikl{\'o}s B{\'o}na, 437--484, CRC Press, Boca Raton, FLress, 2015.

	\bibitem{Braenden2011Proof}
	P.~Br\"{a}nd\'{e}n, J.~Haglund, M.~Visontai, and D.~G. Wagner, Proof of the monotone column permanent conjecture, in \emph{Notions of positivity and the geometry of polynomials}, Trends Math., 63--78, Birkh\"{a}user/Springer Basel AG, Basel, 2011.

	\bibitem{Braenden2020Lorentzian}
	P.~Br\"{a}nd\'{e}n and J.~Huh, Lorentzian polynomials, \emph{Ann. of Math. (2)}, \textbf{192} (2020), 821--891.

	\bibitem{Braenden2016Multivariate}
	P.~Br\"{a}nd\'{e}n, M.~Leander, and M.~Visontai, Multivariate {E}ulerian polynomials and exclusion processes, \emph{Combin. Probab. Comput.}, \textbf{25} (2016), 486--499.

	\bibitem{Brenti1989Unimodal}
	F.~Brenti, Unimodal, log-concave and {P}{\'o}lya frequency sequences in combinatorics, \emph{Mem. Amer. Math. Soc.}, \textbf{81} (1989), viii+106.

	\bibitem{Brenti1994Log}
	F.~Brenti, Log-concave and unimodal sequences in algebra, combinatorics, and geometry: an update, in \emph{Jerusalem combinatorics '93}, vol. 178 of \emph{Contemp. Math.}, 71--89, Amer. Math. Soc., Providence, RI, 1994.

	\bibitem{Chen1993Context}
	W.~Y.~C. Chen, Context-free grammars, differential operators and formal power series, vol. 117, 113--129, 1993. Conference on Formal Power Series and Algebraic Combinatorics (Bordeaux, 1991).

	\bibitem{Chen2006Old}
	W.~Y.~C. Chen, E.~Deutsch, and S.~Elizalde, Old and young leaves on plane trees, \emph{European J. Combin.}, \textbf{27} (2006), 414--427.

	\bibitem{Chen2017Context}
	W.~Y.~C. Chen and A.~M. Fu, Context-free grammars for permutations and increasing trees, \emph{Adv. in Appl. Math.}, \textbf{82} (2017), 58--82.

	\bibitem{Chen2023Gessel}
	W.~Y.~C. Chen, A.~M. Fu, and S.~H.~F. Yan, The {G}essel correspondence and the partial {$\gamma$}-positivity of the {E}ulerian polynomials on multiset {S}tirling permutations, \emph{European J. Combin.}, \textbf{109} (2023), 103655.

	\bibitem{Chen2020Context}
	W.~Y.~C. Chen, R.~X.~J. Hao, and H.~R.~L. Yang, Context-free grammars and stable multivariate polynomials over {S}tirling permutations, in \emph{Algorithmic combinatorics: enumerative combinatorics, special functions and computer algebra---in honour of {P}eter {P}aule on his 60th birthday}, Texts Monogr. Symbol. Comput., 109--135, Springer, Cham, 2020.

	\bibitem{Chen2021context}
	W.~Y.~C. Chen and H.~R.~L. Yang, A context-free grammar for the {R}amanujan-{S}hor polynomials, \emph{Adv. in Appl. Math.}, \textbf{126} (2021), 101908.

	\bibitem{Chen2022Recurrences}
	X.~Chen, A.~L.~B. Yang, and J.~J.~Y. Zhao, Recurrences for {C}allan's generalization of {N}arayana polynomials, \emph{J. Syst. Sci. Complex.}, \textbf{35} (2022), 1573--1585.

	\bibitem{Dumont1996Grammaire}
	D.~Dumont and A.~Ramamonjisoa, Grammaire de {R}amanujan et arbres de {C}ayley, vol.~3, Research Paper 17, approx. 18, 1996. The Foata Festschrift.

	\bibitem{Elizalde2021Descents}
	S.~Elizalde, Descents on quasi-{S}tirling permutations, \emph{J. Combin. Theory Ser. A}, \textbf{180} (2021), 105429.

	\bibitem{Fomin2007Root}
	S.~Fomin and N.~Reading, Root systems and generalized associahedra, in \emph{Geometric combinatorics}, vol.~13 of \emph{IAS/Park City Math. Ser.}, 63--131, Amer. Math. Soc., Providence, RI, 2007.

	\bibitem{Haglund2012Stable}
	J.~Haglund and M.~Visontai, Stable multivariate {E}ulerian polynomials and generalized {S}tirling permutations, \emph{European J. Combin.}, \textbf{33} (2012), 477--487.

	\bibitem{Gessel1978Stirling}
	I.~Gessel and R.P. Stanley, Stirling polynomials, \emph{J. Combin. Theory Ser. A}, \textbf{24} (1978), 25--33.

	\bibitem{Janson2008Plane}
	S.~Janson, Plane recursive trees, Stirling permutations and an urn model, in \emph{Fifth Colloquium on Mathematics and Computer Science}, 541–547, Discrete Math. Theor. Comput. Sci. Proc., AI, 2008.

	\bibitem{Janson2011Generalized}
	S. Janson, M. Kuba, A. Panholzer, Generalized Stirling permutations,
	families of increasing trees and urn models, \emph{J. Combin. Theory Ser. A},
	\textbf{118} (2011), 94–114.


	\bibitem{Liu2007unified}
	L.~L. Liu and Y.~Wang, A unified approach to polynomial sequences with only real zeros, \emph{Adv. in Appl. Math.}, \textbf{38} (2007), 542--560.

	\bibitem{Ma2019gamma}
	S.-M. Ma, J.~Ma, and Y.-N. Yeh, {$\gamma$}-positivity and partial {$\gamma$}-positivity of descent-type polynomials, \emph{J. Combin. Theory Ser. A}, \textbf{167} (2019), 257--293.

	\bibitem{Ma2023Stirling}
	S.-M. Ma, H.~Qi, J.~Yeh, and Y.-N. Yeh,  Stirling permutation codes, \emph{J. Combin. Theory Ser. A}, \textbf{199}(2023), 105777.

	\bibitem{Pemantle2012Hyperbolicity}
	R.~Pemantle, Hyperbolicity and stable polynomials in combinatorics and probability, in \emph{Current developments in mathematics, 2011}, 57--123, Int. Press, Somerville, MA, 2012.

	\bibitem{Petersen2015Eulerian}
	T.~K. Petersen, \emph{Eulerian numbers}, Birkh\"{a}user Advanced Texts: Basler Lehrb\"{u}cher. [Birkh\"{a}user Advanced Texts: Basel Textbooks], Birkh\"{a}user/Springer, New York, 2015.

	\bibitem{Shor1995new}
	P.~W. Shor, A new proof of {C}ayley's formula for counting labeled trees, \emph{J. Combin. Theory Ser. A}, \textbf{71} (1995), 154--158.

	\bibitem{SloaneOnline}
	N.~J.~A. Sloane, The {O}nline {E}ncyclopedia of {I}nteger {S}equences, {\url{http://oeis.org}.}

	\bibitem{Stanley1989Log}
	R.~P. Stanley, Log-concave and unimodal sequences in algebra, combinatorics, and geometry, in \emph{Graph theory and its applications: {E}ast and {W}est ({J}inan, 1986)}, vol. 576 of \emph{Ann. New York Acad. Sci.}, 500--535, New York Acad. Sci., New York, 1989.

	\bibitem{Visontai2013Stable}
	M.~Visontai and N.~Williams, Stable multivariate {$W$}-{E}ulerian polynomials, \emph{J. Combin. Theory Ser. A}, \textbf{120} (2013), 1929--1945.

	\bibitem{Wagner2011Multivariate}
	D.~G. Wagner, Multivariate stable polynomials: theory and applications, \emph{Bull. Amer. Math. Soc. (N.S.)}, \textbf{48} (2011), 53--84.

	\bibitem{Yan2022Quasi}
	S.~H.~F. Yan and X.~Zhu, Quasi-{S}tirling polynomials on multisets, \emph{Adv. in Appl. Math.}, \textbf{141} (2022), 102415.

	\bibitem{Zhang2019Multivariate}
	P.~B. Zhang and X.~Zhang, Multivariate stable {E}ulerian polynomials on segmented permutations, \emph{European J. Combin.}, \textbf{78} (2019), 155--162.

\end{thebibliography}

\end{document}